\def\cR{\mathcal R}
\renewcommand{\dis}{\mathrm{dis}}
\renewcommand{\codis}{\mathrm{codis}}
\DeclareMathOperator{\sep}{sep}
\begin{document}

\title{Lower bounding the Gromov--Hausdorff distance in metric graphs}
\keywords{Gromov--Hausdorff distance, Hausdorff distance, Metric graph, Convexity radius}

\author{Henry Adams}
\email{henry.adams@ufl.edu}
\author{Sushovan Majhi}
\email{s.majhi@gwu.edu}
\author{Fedor Manin}
\email{manin@math.toronto.edu}
\author{\v{Z}iga Virk}
\email{ziga.virk@fri.uni-lj.si}
\author{Nicol\`o Zava}
\email{nicolo.zava@ist.ac.at}

\begin{abstract}
Let $G$ be a finite, connected metric graph and let $X\subseteq G$ be a subset.
If $X$ is sufficiently dense in $G$, we show that the Gromov--Hausdorff distance matches the Hausdorff distance, namely $d_\gh(G,X)=d_\h(G,X)$.
When the metric graph is the circle $G=S^1$ with circumference $2\pi$, a recent study established the equality $d_\gh(S^1,X)=d_\h(S^1,X)$ whenever $d_\gh(S^1,X)<\frac{\pi}{6}$.
Our results relax this hypothesis to $d_\gh(S^1,X)<\frac{\pi}{3}$, and furthermore, we show that the constant $\frac{\pi}{3}$ is the best possible.
We lower bound the Gromov--Hausdorff distance $d_\gh(G,X)$ by the Hausdorff distance $d_\h(G,X)$ via a simple topological obstruction: the existence of a possibly discontinuous function $f\colon G \to X$ with too small distortion contradicts the connectedness of $G$.
\end{abstract}

\subjclass[2020]{51F99, 52C25, 54H25}

\keywords{Gromov--Hausdorff distance, distortion, connectedness, Borsuk--Ulam theorem}

\maketitle

\setcounter{tocdepth}{1}
\tableofcontents

\section{Introduction}

The Gromov--Hausdorff distance between two abstract metric spaces $(X,d_X)$ and $(Y,d_Y)$, denoted $d_\gh(X,Y)$, provides a dissimilarity measure quantifying how far the two metric spaces are from being isometric~\cite{edwards1975structure,gromov1981structures,tuzhilin2016invented}.
In the past two decades, this distance has found applications in topological data analysis (TDA) as a theoretical framework for shape and dataset comparison~\cite{memoli2007use}, which motivated the study of its quantitative aspects.
However, precise computations of the Gromov--Hausdorff distance between even simple spaces are mostly unknown, with certain exceptions such as the Gromov--Hausdorff distance between a line segment and a circle~\cite{ji2021gromov} and between spheres of certain dimensions~\cite{lim2023gromov,GH-BU-VR,harrison2023quantitative}.
Even approximating the Gromov--Hausdorff distance by a factor of $3$ in the case of trees with unit-edge length is known to be NP-hard~\cite{GH_trees_NP,schmiedl2017computational}.
A better approximation factor can only be achieved in very special cases; for example,~\cite{majhi2023GH} provides a $\frac{5}{4}$-approximation scheme if $X$ and $Y$ are finite subsets of the real line.
Developing tools and pipelines to estimate the Gromov--Hausdorff distance is a central research direction in the computational topology community.

To estimate the Gromov--Hausdorff distance between two metric spaces $X$ and $Y$, one may:\\
\textbf{(A)}
Find sufficiently nice samples $X'\subseteq X$ and $Y'\subseteq Y$ approximating the geometry of $X$ and $Y$,\\ 
\textbf{(B)}
Efficiently bound the Gromov--Hausdorff distance between the subsets $X'$ and $Y'$.\\
Indeed, when $X$ and $Y$ are infinite continuous objects---e.g., Riemannian manifolds, metric graphs---and one wants to use computational machinery, it is often essential to resort to finite subsets approximating the geometry of the original spaces.

We reformulate the first task \textbf{(A)} as follows: how dense does the sample $X'$ have to be in $X$ to capture the geometry of the ambient space $X$? 
To systematize this question, we use the Hausdorff distance as a measure of density and ask if we can provide a threshold $\varepsilon$ and constant $0<C\leq1$ such that $d_\gh(X,X')\geq C\cdot d_\h(X,X')$ whenever $d_\h(X,X')<\varepsilon$.
In particular, $d_\gh(X,X')=d_\h(X,X')$ if $C=1$.
In~\cite{HvsGH}, the authors show that when $X$ is a manifold and $d_\gh(X,X')<\varepsilon$ (which implies $d_\h(X,X')<\varepsilon$) for some constant $\varepsilon$, then $d_\gh(X',X)\geq C\cdot d_\h(X,X')$ for some $\frac{1}{2}\le C\le 1$.
Our study continues that investigation and proves improved results (such as with $C=1$) when $X$ is a metric graph, hence relaxing the manifold assumption.

A fruitful approach to provide bounds on the Gromov--Hausdorff distance in task \textbf{(B)} is to use stable metric invariants.
One associates each metric space $X$ with a value $\psi(X)$ in another computationally simpler metric space so that $\psi(X)$ and $\psi(Y)$ are close whenever $X$ and $Y$ are close in the Gromov--Hausdorff distance.
An immediate example is the diameter; we have $d_\gh(X,Y)\ge\frac{1}{2}|\diam(X)-\diam(Y)|$.
However, the diameter bound is often not informative when the metric spaces are of similar diameter but metrically very different.
For further stable metric invariants, we refer the reader to~\cite{memoli2012some}.
Other central examples in computational topology are Vietoris--Rips persistence diagrams~\cite{ChazalDeSilvaOudot2014} and hierarchical clustering~\cite{MemoliHierarchical2010}.
In~\cite{zava_embed} dimension theory is used to provide unavoidable limits to the precision of metric invariants with values in Hilbert spaces.

One of the precursors of algebraic topology was Brouwer's proof of \emph{invariance of dimension}, namely that $\R^m$ is not homeomorphic to $\R^n$ for $n>m$.
Proving $\R^1$ is not homeomorphic to $\R^n$ for $n>1$ relies only on connectedness: removing a single point from $\R^1$ leaves a disconnected space, which is not the case for $\R^n$ with $n>1$.
However, to prove invariance of dimension for $n>m\ge 2$, Brouwer relied on concepts related to the \emph{degree} of a map between spheres~\cite{brouwer1911abbildung,brouwer1911beweis}, which is often now formalized using higher-dimensional homotopy groups or homology groups.
The topological obstructions to small Gromov--Hausdorff distances in~\cite{HvsGH} relied on homological tools, namely the fundamental class of a manifold.
One of the key contributions of our paper is to go ``back in time'' and produce strong lower bounds on Gromov--Hausdorff distances using simpler topological tools: we show that small Gromov--Hausdorff distances would contradict the connectedness of a space.

\subsection*{Overview of our results}
Let $G$ be a connected, finite metric graph, and let $X \subseteq G$ be a subset.
We show how to lower bound the Gromov--Hausdorff distance $d_\gh(G,X)$ in terms of a constant multiplier times the Hausdorff distance $d_\h(G,X)$, namely $d_\gh(G,X) \ge C\cdot d_\h(G,X)$.
We often obtain the \emph{optimal} value $C=1$.

A unifying theme in our proofs is that we show that a possibly discontinuous function $f\colon G\to X$ induces a continuous and non-surjective map $\overline{f}\colon G\to G$.
We then use non-surjectivity (and related properties) to lower bound the distortion of $\overline{f}$ and $f$, hence lower bounding $d_\gh(G,X)$.

The simplest metric graph is a tree $T$.
Let $\partial T$ denote the leaves of $T$, and let $\vec{d}_\h(\partial T, X)$ denote the directed Hausdorff distance from the leaves to the sample $X$ (see Section~\ref{sec:background}).
We show the following.

\begin{theorem_maps_and_trees}
Let $T$ be a finite metric tree and let $X\subseteq T$.
If $d_\h(T,X)>\vec d_\h(\partial T,X)$, then $d_\gh(T,X)=d_\h(T,X)$.
\end{theorem_maps_and_trees}

The interpretation is as follows: if the biggest gaps in the sampling $X\subseteq T$ are not near the leaves of $T$ (which by Proposition~\ref{prop:assumption_on_leaves_generic} is typical behavior), then $d_\gh(T, X)=d_\h(T, X)$.
The following theorem shows that some control on the set of leaf vertices of $T$ is needed.

\begin{theorem_counterExample}
For any $\varepsilon > 0$ there exists a finite metric tree $T$ and a closed subset $X\subseteq T$ such that $d_\gh(T, X) \leq \varepsilon \cdot d_\h(T, X)$.
\end{theorem_counterExample}

When $G=S^1$ is the circle of circumference $2\pi$,~\cite{HvsGH} shows that if $d_\gh(S^1, X)<\frac{\pi}{6}$, then $d_\gh(S^1, X) = d_\h(S^1, X)$.
The following theorem improves the constant $\frac{\pi}{6}$ to $\frac{\pi}{3}$.

\begin{theorem_circleSufficient}
For any subset $X \subseteq S^1$, we have $d_\gh(S^1, X) \ge \min\left\{ d_\h(S^1, X),\ \tfrac{\pi}{3} \right\}$.
\end{theorem_circleSufficient}

\noindent In Theorem~\ref{thm:circleOptimal} we show that this constant $\frac{\pi}{3}$ is the best possible, namely that for any $\varepsilon\in\left(0,\frac{\pi}{6}\right)$, there exists a nonempty compact subset $X\subseteq S^1$ with $d_\h(S^1, X)=\frac{\pi}{3}+\varepsilon$ and $d_\gh(S^1, X)=\frac{\pi}{3}<d_\h(S^1, X)$.

We generalize the above results for the circle and trees to general metric graphs.
Let $G$ be a finite metric graph with set of leaf vertices $\partial G$.
We prove that if $X$ is dense enough relative to $e(G_0)$, which is the smallest edge length between vertices in the core $G_0$ of $G$ (see Section~\ref{sec:background}), then the Gromov--Hausdorff distance matches the Hausdorff distance.

\begin{theorem_graphs}
Let $G$ be a finite connected metric graph and let $X\subseteq G$.
If $\partial G\neq \emptyset$, suppose $d_\h(G,X)>\vec{d}_\h(\partial G,X)$.
If $d_\gh(G,X)<\tfrac{e(G_0)}{24}$, then $d_\gh(G,X)= d_\h(G,X)$.
\end{theorem_graphs}

Theorems~\ref{thm:circleSufficient} and~\ref{thm:graphs} are obtained using low-dimensional versions of the Borsuk--Ulam theorem for maps from the circle into $\R$ or into a tree, which can be proven using path-connectedness.

All of these results have corollaries allowing us to lower bound the Gromov--Hausdorff distance between two subsets $X, Y \subseteq G$, using the triangle inequality.
That is, for two sufficiently dense subsets $X,Y\subseteq G$, we provide new lower bounds on $d_\gh(X,Y)$ in terms of the Hausdorff distance $d_\h(X,Y)$, which is $\mathcal{O}(n^2)$-time computable if the subsets have at most $n$ points.
We demonstrate such a corollary explicitly in the case of metric trees in Section~\ref{sec:trees}.

In Section~\ref{sec:no-boundary-assumptions} we provide variants of Theorem~\ref{thm:graphs}, resulting in weaker bounds on the Gromov--Hausdorff distance under weaker assumptions depending only on the structure of the metric graph: we no longer assume control over how close each leaf vertex of the graph is to a point of the sample.
The main results are Theorem~\ref{thm:trees_H_vs_GH_maximal_geodesics} for metric trees and Theorem~\ref{thm:graphs_H_vs_GH_maximal_geodesics} for general metric graphs, obtained by considering the lengths of maximal geodesics between leaf vertices.

\subsection*{Organization of the paper}
We begin in Section~\ref{sec:background} with background and notation.
In Section~\ref{sec:ratio}, we provide an example showing that some control over the boundary of the metric graph is necessary.
We begin with the case of metric trees in Section~\ref{sec:trees}, proceed next to the case of the circle in Section~\ref{sec:circle}, and consider general metric graphs in Section~\ref{sec:graphs}.
Finally, in Section~\ref{sec:no-boundary-assumptions} we lower bound the Gromov--Hausdorff distance between a metric graph and a subset thereof when we no longer assume that the sample is sufficiently close to the set of leaves, with variants of these results in Appendix~\ref{app:fractional}.

\section{Background and notation}
\label{sec:background}

We refer the reader to~\cite{BuragoBuragoIvanov,bridson2011metric} for more information on metric spaces, metric graphs, the Hausdorff distance, and the Gromov--Hausdorff distance.

\subsection{Metric spaces}
Let $(Z,d)$ be a metric space.
For any $z\in Z$, we let $B(z;r)=\{z'\in Z~|~d(z,z') < r\}$ denote the open metric ball of radius $r$ about $z$.
For any $r\geq 0$ and $A\subseteq Z$, we let
\[ A^r=\cup_{a\in A}B(a;r)=\left\{z\in Z\mid \inf_{a\in A}d(a, z)<r\right\} \]
denote the {\em $r$-thickening} of $A$ in $Z$, namely, the union of open metric balls of radius $r$ centered at the points of $A$.

The \emph{diameter} of a subset $A\subseteq Z$ is the supremum of all distances between pairs of points of $A$.
More formally, the diameter is defined as
\[\diam(A)=\sup\{d(a,a') \mid a,a'\in A\}.\]
The diameter of a compact set $A$ is always finite.

\subsection{Metric graphs}
To define metric graphs, we follow Definitions~3.1.12 (quotient metric) and~3.2.9 (metric graph) of~\cite{BuragoBuragoIvanov}.
A \emph{metric segment} is a metric space isometric to a real line segment $[0, l]$ for $l\geq0$.
Our metric graphs will always be connected and have a finite number of vertices and edges.

\begin{definition}[Metric Graph]\label{def:metric-graph}
A \emph{finite metric graph} $(G,d)$ is the metric space obtained by gluing a finite collection $E$ of metric segments along their boundaries to a finite collection $V$ of vertices, and equipping the result $\left(\bigsqcup_{e\in E}e\right)/\sim$ with the quotient metric.
We assume this equivalence relation results in a connected space.
\end{definition}

The metric segments $e\in E$ are called the \emph{edges} and the equivalence classes of their endpoints in $V$ are called the \emph{vertices} of $G$.
Since $G$ is connected, we have $d(y,y')<\infty$ for all $y,y'\in G$.
Since $E$ is finite, the quotient semi-metric~\cite[Definition~3.1.12]{BuragoBuragoIvanov} is in fact a metric.\footnote{
One needs to be more careful if $E$ is infinite.
Indeed, suppose one has two vertices $v$ and $v'$, and one edge $e_n$ of length $\frac{1}{n}$ with endpoints $v$ and $v'$ for each $n\in \mathbb{N}$.
Then the quotient semi-metric gives $d(v,v')=\inf_{n\in \mathbb{N}}\frac{1}{n}=0$, even though $v\neq v'$, and hence this semi-metric is not a metric.
These subtleties disappear since we assume $E$ is finite.}

For an edge $e\in E$, its length $|e|$ is the length of the corresponding metric segment.
Note that the length can be different from the distance between the endpoints of $e$ according to the metric $d$.
Extending the definition of length for a continuous path $\gamma:[a,b]\to G$, we define the length $|\gamma|$ as the sum of the (full or partial) lengths of the edges that $\gamma$ traverses.

For a vertex $v$ of a graph, its \emph{degree}, denoted $deg(v)$, is the number of edges incident to $v$.
A self-loop at $v$ contributes $2$ to the degree of $v$.
As an example, the circle $S^1$ can be given a metric graph structure with a single vertex $v$ of degree $2$ at the north pole, along with a self-loop.
We denote the set of leaf vertices, i.e.\ the vertices of degree one, by $\partial G$.

A metric graph $G$ is therefore endowed with two layers of information: a metric structure turning it into a length space~\cite[Definition~2.1.6]{BuragoBuragoIvanov} and a
combinatorial structure $(V, E)$ as an abstract graph.
Using the above definition, we are also allowing $G$ to have single-edge cycles and multiple edges between a pair of vertices.
In this paper, we only consider path connected metric graphs $(G,d)$ with finitely many edges.
Therefore, the metric graphs we consider are compact, and since a compact length space is geodesic, they are also geodesic (meaning for every $x,y\in G$ there is an isometric embedding $\gamma\colon[0,d(x,y)]\to G$ such that $\gamma(0)=x$ and $\gamma(d(x,y))=y$).
This justifies why for any $y,y'\in G$, there is always at least one \emph{shortest path} or \emph{geodesic path} $\gamma$ in $G$ joining $y,y'$ whose length satisfies $|\gamma|=d(y,y')$.

We define an undirected \emph{simple loop} $\mathcal{\gamma}$ of $G$ to be a simple path that intersects itself only at the endpoints.
We denote by $\mathcal{L}(G)$ the set of all simple loops of $G$.
Since $G$ is assumed to have finitely many edges, the set $\mathcal{L}(G)$ is finite.
Since a \emph{metric tree} is defined as a metric graph with no loops, $\mathcal{L}(G)=\emptyset$ for a metric tree $G$.

For a metric graph $G$ that is not an interval, 
let $e(G)$ denote the length of the shortest edge in $G$.

If a subset $A\subseteq G$ has the property that between any two points $a,a'\in A$ there exists a unique shortest geodesic in $G$, then we can define the \emph{(geodesic) convex hull} of $A$ in $G$ to be the union of geodesics between all $a$ and $a'$.
For example, if $\diam(A)<\frac{e(G)}{2}$, then the convex hull of $A$ in $G$ is well-defined.

Given a (connected and finite) metric graph $G$, let $G_0$ be the smallest connected subgraph of $G$ containing the union of all simple loops of $G$.
We call $G_0$ the {\em core of $G$}.
If $G$ has no leaf edges, then $G = G_0$.
Note that $G_0$ is just $G$ with its ``hanging trees'' removed, or in other words, the minimal subspace onto which $G$ deformation retracts.

We will use the following two properties about $G_0$.
First, if $G$ (and hence $G_0$) contains at least two simple loops, then at least one vertex of $G_0$ has degree at least $3$.
We can then consider the canonical graph representation of $G_0$ consisting of only vertices of degree at least $3$.
Second, we note that if a path connected subset $Z \subseteq G$ has first Betti  number equal to the first Betti number of $G$, then $G_0 \subseteq Z$.

\subsection{Hausdorff distances}
Let $(Z, d)$ be a metric space.
If $X$ and $Y$ are two subsets of $Z$, then the \emph{directed Hausdorff distance} from $X$ to $Y$, denoted $\vec{d}_\h(X,Y)$, is defined by
\[
\vec{d}_\mathrm{H}(X,Y)
=\sup_{x\in X}\inf_{y\in Y}d(x,y)
=\inf\bigg\{r\geq 0\mid X\subseteq Y^r\bigg\}
.
\]
Note that $\vec{d}_\h(X,Y)$ is not symmetric in general.

To retain symmetry, the \emph{Hausdorff distance}, denoted $d_\h(X,Y)$, is defined as
\[
d_\h(X,Y)=\max\left\{\vec{d}_\h(X,Y), \vec{d}_\h(X,Y)\right\}.
\]
In other words, the Hausdorff distance finds the infimum over all real numbers $r$ such that if we thicken $Y$ by $r$ it contains $X$, and if we thicken $X$ by $r$ it contains $Y$.
If no such $r$ exists then the Hausdorff distance is infinity.

\subsection{Gromov--Hausdorff distances}

Let $(X,d_X)$ and $(Y,d_Y)$ be metric spaces.
In equations~\eqref{eq:dgh-embeddings}--\eqref{eq:dgh-functions} we give three equivalent definitions of the Gromov--Hausdorff distance between $X$ and $Y$.

The first definition follows~\cite{gromov1981structures}.
The \emph{Gromov--Hausdorff distance $d_\gh(X,Y)$} is the infimum of the Hausdorff distances between $F(X)$ and $G(Y)$ in $Z$, 
\begin{equation}
\label{eq:dgh-embeddings}
d_\gh(X,Y)=\inf_{F,G,Z}d_\h(F(X),G(Y)),
\end{equation}
with the infimum being over all metric spaces $Z$ and all isometric embeddings $F\colon X \hookrightarrow Z $ and $G\colon Y \hookrightarrow Z$.

The second definition uses correspondences.
A relation $\cR\subseteq X\times Y$ is called a \emph{correspondence} if the following two conditions hold: for every $x \in X$ there exists some $y \in Y$ with $(x,y) \in \cR$, and
for every $y \in Y$ there exists some $x \in X$ exists with $(x,y) \in \cR$.
The \emph{distortion} of a correspondence $\cR$ is
\[\dis(\cR)=\sup_{(x,y),(x',y')\in \cR} \left|d_{X}(x,x')-d_{Y}(y,y')\right|.\]
The \emph{Gromov--Hausdorff} distance between two (arbitrary) metric spaces $X$ and $Y$ is
\begin{equation}
\label{eq:dgh-correspondences}
d_{\gh}(X,Y) = \tfrac{1}{2} \inf_{\cR \subseteq X \times Y}\dis(\cR),
\end{equation}
where the infimum is taken over all correspondences $\cR$ between $X$ and $Y$~\cite{BuragoBuragoIvanov,kalton1999distances}.
As shown in~\cite{ivanov2016realizations}, if $X$ and $Y$ are both compact metric spaces, a distortion-minimizing correspondence in the definition of $d_\gh(X,Y)$ always exists.

For metric spaces $(X,d_X)$ and $(Y,d_Y)$, let $f \colon X \to Y$ and $g \colon Y \to X$ be two (possibly discontinuous) functions between them.
We define the \emph{distortion of $f$} as
\[\dis(f)=\sup_{x,x'\in X}\left|d_{X}(x,x')-d_{Y}(f(x),f(x'))\right|.\]
The distortion of $g$ is defined similarly.
The \emph{codistortion of $f$ and $g$} is defined as
\[\codis(f,g)=\sup_{x\in X,y\in Y}\left|d_{X}(x,g(y))-d_{Y}(f(x),y)\right|.\]
Kalton and Ostrovskii~\cite{kalton1999distances} show that
\begin{equation}
\label{eq:dgh-functions}
d_{\gh}(X,Y) = \tfrac{1}{2}\inf_{f,g}\max\{\dis(f),\dis(g),\codis(f,g)\},
\end{equation}
where $f\colon X \to Y$ and $g\colon Y \to X$ are any functions.
It follows that if $d_\gh(X,Y) < C$, then there is some function $f\colon X\to Y$ with $\frac{1}{2}\dis(f) < C$.

\subsection{Gromov--Hausdorff distances and compactness}

The following result is folklore.

\begin{lemma}
\label{lemma:H_GH_and_compactness}
Let $X$ be a metric space and $Y$ be a subset thereof.
Let us denote by $\overline Y$ the closure of $Y$ in $X$.
Then
\[
d_\h(X,Y)=d_\h(X,\overline Y)\quad\text{and}\quad d_\gh(X,Y)=d_\gh(X,\overline Y).
\]
Furthermore, for every two subsets $Y$ and $Z$ of $X$, we have $\vec d_\h(Z,Y)=\vec d_\h(Z,\overline Y)$.
\end{lemma}

\begin{proof}
The first equality is straightforward, and the second one can be easily derived.
Indeed, using the triangle inequality of the Gromov--Hausdorff distance and the fact that the Hausdorff distance upper bounds the Gromov--Hausdorff distance, we obtain
\[
d_\gh(X,Y)\leq d_\gh(X,\overline Y)+d_\gh(\overline Y,Y)\leq d_\gh(X,\overline Y)+d_\h(\overline Y,Y)=d_\gh(X,\overline Y),
\]
and, similarly, $d_\gh(X,\overline Y)\leq d_\gh(X,Y)$.

Finally, we can use the triangle inequality of the directed Hausdorff distance and the first stated equality to show the final result:
\[
\vec d_\h(Z,Y)\leq\vec d_\h(Z,\overline Y)+\vec d_\h(\overline Y,Y)\leq\vec d_\h(Z,\overline Y)+d_\h(\overline Y,Y)=d_\h(Z,\overline Y),
\]
and similarly $\vec d_\h(Z,\overline Y)\leq d_\h(Z,Y)$.
\end{proof}

We will consistently apply Lemma~\ref{lemma:H_GH_and_compactness} to the case where the ambient metric space is compact.
Thus, when we are studying the Hausdorff and the Gromov--Hausdorff distances between a compact metric space and a subspace thereof, we can assume---replacing the subspace with its closure---that the subspace is compact as well.

\subsection{Gromov--Hausdorff distances and connectedness}

\begin{lemma}
\label{lemma:path_connected_distortion2}
Let $f\colon X\to Y$ be a function between metric spaces, with $X$ path-connected and $Y$ geodesic.
Then for every $r>\tfrac{\dis(f)}{2}$, the thickening $f(X)^r$ is path connected.
\end{lemma}

\begin{proof}
Let $y_1,y_2\in f(X)^r$ and $x_1,x_2\in X$ be points such that $d_Y(y_i,f(x_i))<r$ for $i=1,2$.
Since $Y$ is geodesic, the entire geodesic connecting $y_i$ to $f(x_i)$ is contained in $B(f(x_i),r)\subseteq f(X)^r$.
Pick $\varepsilon>0$ sufficiently small to satisfy $r>\tfrac{\dis(f)}{2}+\varepsilon$.
Since $X$ is path connected, there are $z_1=x_1,z_2,\dots,z_{n+1}=x_2$ such that $d(z_j,z_{j+1})<\varepsilon$ for every $j=0,\dots,n$.
Thus, $d(f(z_j),f(z_{j+1}))<\dis(f)+\varepsilon$.
Since $Y$ is geodesic, there are $m_j\in Y$ such that $d(f(z_j),m_j)=d(f(z_{j+1}),m_j)=\tfrac{1}{2}d(f(z_j),f(z_{j+1}))<r$ and the geodesics connecting $m_j$ to $f(z_j)$ and $f(z_{j+1})$ are contained in $f(X)^r$.
\end{proof}

\subsection{Gromov--Hausdorff distances and graphs}

Let $G$ be a finite metric graph and let $X\subseteq G$ be a compact subset.
To use functions $f\colon G\to X$ to prove bounds on the Gromov--Hausdorff distance, we need to be a bit careful.
Indeed, even if the sample $X$ is close to the leaves of $G$, it may be that $f(G)$ is not close to the leaves.
Luckily, we have a lot of freedom to construct our function $f$.
In the following lemma we show how this can be done.

\begin{lemma}
\label{lemma:map_close_to_the_leaves}
Let $G$ be a finite metric graph and let $X \subseteq G$ be compact.
If $d_\gh(G,X)<r$, then there is a function $f\colon G\to X$ with $\frac{1}{2}\dis(f)<r$ and $\vec d_\h(\partial G,f(G))=\vec d_\h(\partial G,X)$.
\end{lemma}

\begin{proof}
Let $\cR\subseteq G\times X$ be a correspondence satisfying $\dis(\cR)<2r$.
Fix $\varepsilon>0$ such that $\dis(\cR)+\varepsilon<2r$.
For every $v\in\partial G$, pick $x_v\in X$ as a point satisfying $d(v,x_v)=\min_{x\in X}d(v,x)$.
We first recursively construct a surjective map $\widetilde f$ from a subset of $G$ to $\{x_v\mid v\in\partial G\}$.
Let us arbitrarily order the leaves $v_1,\dots,v_n$ and denote the point $x_{v_i}$ by $x_i$ for the sake of simplicity.

{\bf Base step.}
Pick any $y_1\in G$ such that $(y_1,x_1)\in\cR$ and set $\widetilde f(y_1)=x_1$.

{\bf Recursive step.}
Assume we have defined $y_1,\dots,y_m\in G$ distinct and set $\widetilde f(y_i)=x_i$.
Pick $y\in G$ satisfying $(y,x_{m+1})\in\cR$.
Let us distinguish two cases.
If $y\notin\{y_1,\dots,y_m\}$, then set $y_{m+1}=y$ and $\widetilde f(y_{m+1})=x_{m+1}$.
Otherwise, fix an arbitrary point $y_{m+1}\in G\setminus\{y_1,\dots,y_m\}$ satisfying $d(y_{m+1},y)<\frac{\varepsilon}{2}$ and define $\widetilde f(y_{m+1})=x_{m+1}$.

Consider an arbitrary extension $f\colon G\to X$ of $\widetilde f\colon\{y_1,\dots,y_n\}\to\{x_1,\dots,x_n\}$ with the property that the graph of $f|_{G\setminus\{y_1,\dots,y_n\}}$ is contained in $\cR$.
Then $\dis(f)\leq\dis(\cR)+2\cdot \tfrac{\varepsilon}{2}<2r$ and $\vec d_\h(\partial G,f(G))=\vec d_\h(\partial G,X)$ by construction.
\end{proof}

\section{Ratio of $d_\gh$ vs $d_\h$ can be arbitrarily small for metric trees}
\label{sec:ratio}

In this section we prove Theorem~\ref{thm:counterExample}, a variant of Theorem~5 of~\cite{HvsGH}, now adapted to metric trees.
As we show in Theorems~\ref{thm:maps_and_trees} and~\ref{thm:graphs}, the Gromov--Hausdorff distance $d_\gh(T,X)$ between a metric graph $T$ and a subset $X\subseteq T$ can be bounded below by their Hausdorff distance $d_\h(T,X)$ when $X$ is close enough to the leaves of $T$, i.e., when $d_\h(T,X)>\vec{d}_\h(\partial T, X)$.
When $d_\h(T,X) < \vec{d}_\h(\partial T, X)$, however, the ratio of $d_\gh(T,X)$ over $d_\h(T,X)$ can become arbitrarily small as we show below.
Furthermore, Theorem~\ref{thm:counterExample} suggests that $\partial T$ may not map close to itself via correspondences with small distortion; see Figure~\ref{Fig:1}.

\begin{figure}[!ht]
\begin{center}
\begin{tikzpicture}[scale=1]
\foreach \x in {10, 20, ..., 170}
\draw (0:0)--(\x : 1 + .01 * \x);
\draw (0,-.5) node {$T$};
\end{tikzpicture}
\quad
\begin{tikzpicture}[scale=1]
\foreach \x in {10, 20, ..., 170}
\draw (0:0)--(\x : 1 + .01 * \x);
\foreach \x in {10, 20, ..., 170}
\draw[ultra thick] (0:0)--(\x : .9 + .01 * \x);
\draw (0,-.5) node {$X' \subseteq T$};
\end{tikzpicture}
\quad
\begin{tikzpicture}[scale=1]
\foreach \x in {10, 20, ..., 170}
\draw (0:0)--(\x : 1 + .01 * \x);
\foreach \x in {10,20, ..., 160}
\draw[ultra thick] (0:0)--(\x: 1 + .01 * \x);
\draw[ultra thick] (0:0)--(170 : 1);
\draw (0,-.5) node {$X \subseteq T$};
\end{tikzpicture}
\end{center}
\caption{A sketch of the construction of Theorem~\ref{thm:counterExample}: (left) The metric tree $T$, (middle) the bold superimposed $X' \subseteq T$ with small $d_\h(T,X')$, and (right) a different isometric embedding of $X'$ into $T$ indicated by $X \subseteq T$ with large $d_\h(T,X)$.}
\label{Fig:1}
\end{figure}

\begin{theorem}
\label{thm:counterExample}
For any $\varepsilon > 0$ there exists a finite metric tree $T$ and a closed subset $X\subseteq T$ such that $d_\gh(T, X) \leq \varepsilon \cdot d_\h(T, X)$.
\end{theorem}

\begin{proof}
Choose $n\in \mathbb{N}$ and let $\varepsilon = \frac{1}{n}$.
For $t \geq 0$, let $I(t)$ be a copy of the interval $[0,t]$ with a designated basepoint $0$.
Define a metric tree $T$ as the wedge of $n$ intervals of lengths $1+\varepsilon, 1+2\varepsilon, \ldots, 1+n\varepsilon=2$, and define a metric tree $X_0$ as the wedge of intervals of lengths $1,1+\varepsilon, 1+2\varepsilon, \ldots, 1+(n-1)\varepsilon=2-\varepsilon$:
\[T = \bigvee_{t=1}^n I(1+ t \cdot \varepsilon)
\quad\text{ and }\quad
X_0 = \bigvee_{t=0}^{n-1} I(1+t \cdot \varepsilon).\]

Recall from~\eqref{eq:dgh-embeddings} that $d_\gh(T,X)$ is the infimum of the Hausdorff distances between $F(T)$ and $G(X)$ in $Z$, with the infimum being over all metric spaces $Z$ and all isometric embeddings $F\colon T \hookrightarrow Z $ and $G\colon X \hookrightarrow Z$.

In Figure~\ref{Fig:1} we have two different isometric embeddings $X', X$ of the same tree $X_0$ into $Z=T$:
one with a small Hausdorff distance $d_\h(T,X')=\varepsilon$ as $X'$ is $T$ with each ray being shortened by $\varepsilon$, and one with a large Hausdorff distance $d_\h(T,X)=1$ as $X$ is $T$ with the longest ray $I(2)$ of length $2$ being replaced with a subray $I(1)$ of length $1$.
By~\eqref{eq:dgh-embeddings}, we have $d_\gh(T,X) \leq d_\h(T,X') = \varepsilon d_\h(T,X)$.
\end{proof}

An analogy can be made with the construction in Theorem~5 of~\cite{HvsGH}, where instead of permuting standard basis vectors in Euclidean space, we are now permuting the leaves of a metric tree.

\section{Metric trees with coefficient $1$}
\label{sec:trees}

Section~\ref{sec:ratio} demonstrates that results claiming $d_\gh(G,X)=d_\h(G,X)$ for $X$ a subset of a metric graph $G$ do not hold in general, even if $d_\h(G,X)$ is arbitrarily small.
In this section, as well as in Section~\ref{sec:graphs}, we prove related results relying on bounds on $\vec{d}_\h(\partial G,X)$.
In particular, we will assume the condition  $d_\h(G,X)> \vec{d}_\h(\partial G,X)$ which rules out the example constructed in Theorem~\ref{thm:counterExample}.
This condition has a high probability of being satisfied if $X$ is a large uniform sample of $G$.

\begin{proposition}
\label{prop:assumption_on_leaves_generic}
If $G$ is a metric graph and $X$ is a sample of $n$ uniformly random points, then with high probability as $n \to \infty$, we have $d_\h(G,X) > \vec{d}_\h(\partial G,X)$.
\end{proposition}

\begin{proof}
Normalize so that the total length of $G$ is $1$.
We show that with high probability,
\[d_\h(G,X)>\tfrac{\sqrt{\log n}}{n}>\vec{d}_\h(\partial G,X).\]
Note that for any region of length $\ell$, the probability that $X$ misses that region is
$\left(1-\ell\right)^n \approx e^{-\ell n}$.

We first show the second inequality.
Let $k$ be the number of leaves in the graph.
Then $\vec{d}_\h(\partial G,X)\ge\frac{\sqrt{\log n}}{n}$ if and only if $X$ is disjoint from the $\frac{\sqrt{\log n}}{n}$-neighborhood of the leaves.
For large enough $n$, the length of this neighborhood is $\frac{k\sqrt{\log n}}{n}$, so the probability that $X$ completely misses this region is
\[\left(1-\tfrac{k\sqrt{\log n}}{n}\right)^n \approx e^{-k\sqrt{\log n}} \to 0.\]

Now we show the first inequality.
Let $\{y_i\}$ be a maximal set of points in $G$ whose $\tfrac{\sqrt{\log n}}{n}$-neighborhoods $B_i$ don't overlap and don't hit a vertex; note the cardinality of this set is $r=\Theta\big(\tfrac{n}{\sqrt{\log n}}\big)$.
We show the first inequality by showing that with high probability, $X$ misses at least one of the $B_i$.
If the events that $X$ misses $B_i$, for each $i$, were independent, then we would have
\begin{align*}
\mathbb{P}[X\text{ hits all of the }B_i] &= \mathbb{P}[X\text{ hits a given }B_i]^{\Theta\bigl(\frac{n}{\sqrt{\log n}}\bigr)} \approx \biggl(1-e^{-2\sqrt{\log n}}\biggr)^{\Theta\bigl(\frac{n}{\sqrt{\log n}}\bigr)} \to 0.
\end{align*}
In fact, they are negatively correlated, so the equality above becomes $\leq$.
To make this precise, for any $0\leq j\leq n$,  let $p_j$ be the probability that exactly $j$ sample points hit $\bigcup_i B_i$.
Then, thinking of the number of times each $B_i$ is hit as defining a partition of $\{1,\ldots,j\}$ into $r$ subsets, we get
\[
\mathbb{P}(X\text{ hits all of the }B_i\mid j\text{ sample points hit }{\textstyle\bigcup_i B_i})
=\tfrac{{j-1\choose r-1}}{{j+r-1\choose r-1}}.
\]
Now we have
\begin{align*}
\mathbb{P}(X\text{ hits all of the }B_i)
&=\sum_{j=0}^n p_j
\mathbb{P}(X\text{ hits all of the }B_i\mid j\text{ sample points hit }{\textstyle\bigcup_i B_i})\\
&=\sum_{j=0}^n p_j\tfrac{{j-1\choose r-1}}{{j+r-1\choose r-1}}
=\sum_{j=0}^n p_j\left[\tfrac{j-1}{j+r-1}\cdot\tfrac{j-2}{j+r-2}\ldots\tfrac{j-r+1}{j+1}\right]\\
&\leq\sum_{j=0}^n p_j\left(\tfrac{n-1}{n+r-1}\right)^r
=\left(\tfrac{n-1}{n+r-1}\right)^r\sum_{j=0}^n p_j\\
&=\left(1-\tfrac{r}{n+r-1}\right)^r\quad\quad\quad\text{since }\textstyle{\sum_{j=0}^n p_j=1} \\
&=\left(1-\tfrac{r/n}{1+r/n-1/n}\right)^r \\
&=\left(1-\tfrac{\Theta(1/\sqrt{\log{n}})}{1+\Theta(1/\sqrt{\log{n}})-1/n}\right)^{\Theta(n/\sqrt{\log{n}})}\xrightarrow{n\to\infty}0.
\end{align*}
It follows that with high probability, some $y_i$ is at distance at least $\frac{\sqrt{\log n}}{n}$ from $X$.
\end{proof}

We obtain the following results for metric trees.

\begin{proposition}
\label{thm:tree_via_maps}
Let $T$ be a finite tree, and let $f\colon T\to T$ be a function.
If $d_\h(T,f(T))>\vec d_\h(\partial T,f(T))$, then $\tfrac{1}{2}\dis(f)\geq d_\h(T,f(T))$.
\end{proposition}

\begin{proof}
Let $\tfrac{1}{2}\dis(f)<r$ and let $\overline x\in T$ be a point such that $B(\overline x,d_\h(T,f(T)))\cap f(T)=\emptyset$.
The assumption $d_\h(T,f(T))>\vec d_\h(\partial T,f(T))$ implies that $T\setminus\{\overline x\}$ is disconnected and at least two distinct components thereof have non-empty intersection with $f(T)$ (since a tree has at least two leaves).
Since $f(T)^r$ is connected by Lemma~\ref{lemma:path_connected_distortion2}, we have $\overline x\in f(T)^r$, which implies $d_\h(T,f(T))<r$.
Since $r$ can be taken arbitrarily, $d_\h(T,f(T))\leq\tfrac{1}{2}\dis(f)$.
\end{proof}

\begin{theorem}
\label{thm:maps_and_trees}
Let $T$ be a finite metric tree and let $X\subseteq T$.
If $d_\h(T,X)>\vec d_\h(\partial T,X)$, then $d_\gh(T,X)=d_\h(T,X)$.
\end{theorem}

\begin{proof}
By Lemma~\ref{lemma:H_GH_and_compactness}, we may assume that $X$ is compact.
Indeed, we can consider $\overline X$, which is compact since $T$ is.
Note $\overline X$ still satisfies the same hypothesis as $X$ since 
\[
d_\h(T,\overline X)=d_\h(T,X)>\vec d_\h(\partial T,X)=\vec d_\h(\partial T,\overline X).
\]
Finally, once we achieve the equality $d_\gh(T,\overline X)=d_\h(T,\overline X)$, the corresponding claimed equality for $X$ follows given that $d_\gh(T,\overline X)=d_\gh(T,X)$.

Suppose for a contradiction that $d_\gh(T,X)<d_\h(T,X)$.
Then by Lemma~\ref{lemma:map_close_to_the_leaves}, there is a function $f\colon T\to X\subseteq T$ with $\tfrac{1}{2}\dis(f)< d_\h(T,X)\leq d_\h(T,f(T))$ and 
\[\vec d_\h(\partial T,f(T))=\vec d_\h(\partial T,X)<d_\h(T,X)\leq d_\h(T,f(T)),\]
contradicting Proposition~\ref{thm:tree_via_maps}.
\end{proof}

In other words, $d_\h(T,X) =\max\{d_\gh(T,X), \vec{d}_\h(\partial T, X)\}$.

We remark that if $d_\h(T,X)=\vec{d}_\h(\partial T,X)$, then $d_\gh(T,X)$ can be arbitrarily small with respect to $d_\h(T,X)$ as demonstrated in Theorem~\ref{thm:counterExample}.

\begin{remark}
We can use the proof strategy of Theorem~\ref{thm:maps_and_trees} in a wider context.
In particular, it can be applied even if the sample is not close to all the leaves.
It is enough that the empty ball realizing the Hausdorff distance between $G$ and $X$ disconnects $X$.
Consider the tree $T$ and the subset $X$ as represented in the following figure:

\begin{figure}[h!]
\centering
\begin{tikzpicture}
\draw (0,0)--(6,0) (3.5,0)--(3.5,0.3);
\draw (2.5,0.5) node{$T$};
\fill (3,0) circle (2pt) (3.5,0.3) circle (2pt);
\draw (3,0) node[below]{$\overline x$};
\draw (3.5,0.3) node[right]{$v$};
\draw[red,ultra thick] (0,0)--(2,0) (4,0)--(6,0);
\draw[red] (1,0) node[above]{$X$};
\end{tikzpicture}
\label{fig:density_around_the_leaves_too_strong}
\end{figure}
The point $\overline x\in T$ satisfies $B(\overline x,d_\h(G,X))\cap X=\emptyset$ and the ball contains the leaf $v$.
Despite this, we can still apply the proof strategy, with straightforward modifications, because $X\cap (T\setminus\{\overline x\})$ has at least two connected components, concluding that $d_\gh(T,X)=d_\h(T,X)$.
\end{remark}

All of the results in our paper have simple corollaries for two subsets of the tree or graph; we give one such example.

\begin{corollary}
Let $T$ be a finite metric tree and let $X,Y\subseteq T$.
For any $\varepsilon\ge d_\h(T,Y)$, if $d_\h(X,Y)>\vec{d}_\h(\partial T, X)+\varepsilon$, then $d_\gh(X,Y)\ge d_\h(X,Y)-2\varepsilon$.
\end{corollary}

\begin{proof}
We need only apply the triangle inequality:
\begin{align*}
d_\h(X,Y) &\le d_\h(T,X) + d_\h(T,Y) \\
&=\max\left\{d_\gh(T,X), \vec{d}_\h(\partial T, X)\right\} + d_\h(T,Y) && \text{by Theorem~\ref{thm:maps_and_trees}}\\
&\le \max\left\{d_\gh(X, Y)+ d_\gh(T,Y), \vec{d}_\h(\partial T, X)\right\} + d_\h(T,Y) \\
&\le \max\left\{d_\gh(X, Y)+ d_\h(T,Y), \vec{d}_\h(\partial T, X)\right\} + d_\h(T,Y) && \text{since }d_\gh(T,Y)\leq d_\h(T,Y) \\
&= \max\left\{d_\gh(X, Y)+ 2d_\h(T,Y), \vec{d}_\h(\partial T, X) + d_\h(T,Y)\right\} \\
&\le \max\left\{d_\gh(X, Y)+ 2\varepsilon, \vec{d}_\h(\partial T, X) + \varepsilon\right\}
&& \text{since }d_\h(T,Y)\leq\varepsilon.
\qedhere
\end{align*}
\end{proof}

As a particular case of Theorem~\ref{thm:maps_and_trees}, we can provide a closed formula to compute the Gromov--Hausdorff distance between a segment and a sample of it.

\begin{corollary}
\label{coro:segment}
Let $X$ be a compact non-empty subset of an interval $I=[a,b]$.
Denote by $c=\min X$ and $d=\max X$.
Then $d_\gh(I,X)=\max\left\{\frac{c-a+b-d}{2},d_\h([c,d],X)\right\}$.
\end{corollary}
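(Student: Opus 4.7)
Write $\delta \coloneqq (c-a+b-d)/2$ and $M\coloneqq\max\{\delta,\, d_\h([c,d],X)\}$, so the goal is the equality $d_\gh(I,X)=M$. The plan is to prove the lower bound $d_\gh(I,X)\ge M$ as the maximum of two independent estimates, and then to exhibit a ``centering'' correspondence realizing the matching upper bound. Theorem~\ref{thm:optimal-tree}(a) does not apply directly when $\max\{c-a,\,b-d\}\ge d_\h([c,d],X)$, since in that case $d_\h(I,X)=\vec d_\h(\partial I,X)$ and the strict hypothesis fails; a finer argument is needed.

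For the first lower-bound estimate, Example~\ref{ex:trivial-bounds} gives $d_\gh(I,X)\ge\tfrac12(\diam(I)-\diam(X))=\delta$. For the second, to show $d_\gh(I,X)\ge d_\h([c,d],X)$, I would fix any $r<d_\h([c,d],X)$ and suppose toward contradiction that some correspondence $\mathcal R\subseteq I\times X$ satisfies $\dis(\mathcal R)<2r$. Since $\mathcal R$ is a correspondence, $\mathcal R[I]=X$, so Lemma~\ref{lem:r-gamma} forces $X^r$ to be connected. I would then pick $t_0\in[c,d]$ with $d(t_0,X)>r$ and set $x_1=\max(X\cap[c,t_0])$, $x_2=\min(X\cap[t_0,d])$ (both well-defined since $c,d\in X$). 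Then $x_2-x_1>2r$, and because $X\subseteq[c,d]$ contains no point in $(x_1,x_2)$, the thickening $X^r$ is disconnected, a contradiction.

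For the upper bound, consider the isometric embedding $\phi\colon X\to I$ defined by $\phi(x)=x+(a-c)+\delta$, which sends $[c,d]$ to the symmetrically placed interval $[a+\delta,\,b-\delta]\subseteq I$. For each $t\in I$, let $p(t)$ be a closest point of $\phi(X)$ to $t$; checking the three regions $[a,a+\delta]$, $[a+\delta,b-\delta]$, and $[b-\delta,b]$ directly shows $|t-p(t)|\le M$ (the two outer regions contribute $\le\delta$ via $\phi(c)$ or $\phi(d)$, and the middle region contributes $\le d_\h([c,d],X)$). Define
\[
\mathcal R \;=\; \{(\phi(x),x)\mid x\in X\}\;\cup\;\{(t,\phi^{-1}(p(t)))\mid t\in I\}.
\]
Every pair $(t,x)\in\mathcal R$ satisfies $|t-\phi(x)|\le M$, so since $\phi$ is an isometry,
\[
\bigl||t_1-t_2|-|x_1-x_2|\bigr|=\bigl||t_1-t_2|-|\phi(x_1)-\phi(x_2)|\bigr|\le|t_1-\phi(x_1)|+|t_2-\phi(x_2)|\le 2M
\]
for any $(t_1,x_1),(t_2,x_2)\in\mathcal R$; hence $\dis(\mathcal R)\le 2M$ and therefore $d_\gh(I,X)\le M$.

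The main subtle point is the second lower bound $d_\gh(I,X)\ge d_\h([c,d],X)$. It is tempting to invoke Theorem~\ref{thm:optimal-tree}(a) on the sub-tree $[c,d]$, but that would only bound $d_\gh([c,d],X)$, not $d_\gh(I,X)$. The correct approach is to apply Lemma~\ref{lem:r-gamma} to correspondences out of the full interval $I$; the containment $X\subseteq[c,d]$ is exactly what ensures that no outlying $X$-point can thicken into the central gap near $t_0$, so that $X^r$ is genuinely disconnected.
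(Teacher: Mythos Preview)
Your proof is correct. The overall structure---diameter bound, centering isometry for the upper bound, connectedness obstruction for the second lower bound---matches the paper's, but the execution of the second lower bound differs.

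The paper also considers the centered copy $X'=\phi(X)\subseteq I$, notes that $\vec d_\h(\partial I,X')=\delta$, and then does a case split: if $d_\h([c,d],X)\le\delta$ the diameter bound already suffices, while if $d_\h([c,d],X)>\delta$ the strict hypothesis of Theorem~\ref{thm:optimal-tree}(a) holds for the pair $X'\subseteq I$, yielding $d_\gh(I,X)=d_\gh(I,X')=d_\h(I,X')=d_\h([c,d],X)$. You instead bypass Theorem~\ref{thm:optimal-tree}(a) entirely and go straight to Lemma~\ref{lem:r-gamma}, proving the unconditional inequality $d_\gh(I,X)\ge d_\h([c,d],X)$ with no case analysis. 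Your route is more self-contained and avoids the split; the paper's route is more modular, reusing Theorem~\ref{thm:optimal-tree}(a) as a black box via the centering trick (which is exactly the move you flagged as unavailable for $X$ itself but did not notice becomes available for $X'$). Either way, the underlying mechanism is the same connectedness argument.
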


\begin{proof}
Let $X'\subseteq I$ be an isometric copy of $X$ centered in the interval $I$, meaning the distance from $a$ to the closest point in $X'$ is equal to the distance from $b$ to the closest point in $X'$; see Figure~\ref{fig:interval}.
Then 
\[d_{\gh}(I,X)=d_\gh(I,X')\leq d_\h(I,X')=\max\Big\{\tfrac{c-a+b-d}{2},d_\h([c,d],X)\Big\}.\]

We have $d_\gh(I,X)\geq\frac{\diam(I)-\diam(X)}{2}=\frac{c-a+b-d}{2}$.
There are now two cases.
If $d_\h([c,d],X)\leq \frac{c-a+b-d}{2}$, the claim holds.
Alternatively, suppose that $d_\h([c,d],X)>\frac{c-a+b-d}{2}$.
Then, this assumption implies that $d_\h(I,X')=d_\h([c,d],X)$, and also that $d_\h(I,X')>\vec{d}_H(\partial I, X')$ since $\tfrac{c-a+b-d}{2}=\vec{d}_H(\partial I, X')$.
In virtue of the latter inequality, an application of Theorem~\ref{thm:maps_and_trees} to the pair $X'\subseteq I$ gives $d_\h(I,X')=d_\gh(I,X')$, and hence
\[ d_\h([c,d],X) = d_\h(I,X') = d_\gh(I,X') = d_\gh(I,X).\]
\end{proof}

\begin{figure}[hbt]
\centering
\begin{subfigure}{0.45\textwidth}
\centering
\begin{tikzpicture}
\draw (0,0)--(5,0);
\draw (0,0) node[above]{$a$};
\draw (5,0) node[above]{$b$};
\draw[ultra thick,red] (0.5,0)--(0.6,0) (0.8,0)--(1.2,0) (2.5,0)--(2.6,0) (2.9,0)--(3.5,0) (1.3,0)--(1.5,0);
\draw[red] (0.5,0) node[below]{$c$};
\draw[red] (3.5,0) node[below]{$d$};
\draw[red] (0.5,0.2)--(0.5,0.4)--(3.5,0.4)node[above,pos=0.5]{$X$}--(3.5,0.2);
 \draw[white] (0,-0.2)--(0,-0.4)--(1,-0.4)node[below,pos=0.5]{$\tfrac{c-a+b-d}{2}$}--(1,-0.2);
\draw[white] (4,-0.2)--(4,-0.4)--(5,-0.4)node[below,pos=0.5]{$\tfrac{c-a+b-d}{2}$}--(5,-0.2);
\draw (4.2,0) node[below]{$I$};
\end{tikzpicture}
\end{subfigure}
\begin{subfigure}{0.45\textwidth}
\centering
\begin{tikzpicture}
\draw (0,0)--(5,0);
\draw (0,0) node[above]{$a$};
\draw (5,0) node[above]{$b$};
\draw[ultra thick,red] (1,0)--(1.1,0) (1.3,0)--(1.7,0) (3,0)--(3.1,0) (3.4,0)--(4,0) (1.8,0)--(2,0);
\draw[red] (1,0.2)--(1,0.4)--(4,0.4)node[above,pos=0.5]{$X^\prime$}--(4,0.2);
\draw (0,-0.2)--(0,-0.4)--(1,-0.4)node[below,pos=0.5]{$\tfrac{c-a+b-d}{2}$}--(1,-0.2);
\draw (4,-0.2)--(4,-0.4)--(5,-0.4)node[below,pos=0.5]{$\tfrac{c-a+b-d}{2}$}--(5,-0.2);
\end{tikzpicture}
\end{subfigure}
\caption{A representation of the interval $I=[a,b]$ and the subsets $X$ and $X^\prime$ as described in Corollary~\ref{coro:segment} and its proof.}
\label{fig:interval}
\end{figure}

\section{The case of the circle}
\label{sec:circle}

Before considering general metric graphs in the following section, in this section we first understand the simplest connected metric graph which is not a tree: the circle.

In the case of the circle ($G=S^1$), the authors of~\cite{HvsGH} showed that $d_\gh(S^1,X)=d_\h(S^1,X)$ for a compact subset $X\subseteq S^1$ with $d_\gh(S^1,X)<\frac{\pi}{6}$.
A curious question regarding the optimality of the density constant $\frac{\pi}{6}$ was also posed therein~\cite[Question~2]{HvsGH}.
While the constant $\frac{\pi}{6}$ provides a sufficient Gromov--Hausdorff density of a subset $X$ to guarantee the equality of its Hausdorff and Gromov--Hausdorff distances to $S^1$, it turns out this constant is suboptimal.
In this section, we show that $\frac{\pi}{3}$ is the optimal constant in the case of the circle.
That is, for a subset $X\subseteq S^1$, the density condition $d_\gh(S^1,X)<\frac{\pi}{3}$ implies $d_\gh(S^1,X)=d_\h(S^1,X)$ (Theorem~\ref{thm:circleSufficient}), and there is a compact $X\subseteq S^1$ with $\frac{\pi}{3}=d_\gh(S^1,X)<d_\h(S^1,X)$ (Theorem~\ref{thm:circleOptimal}).

\begin{theorem}
\label{thm:circleSufficient}
For any subset $X \subseteq S^1$, we have $d_\gh(S^1, X) \ge \min\left\{ d_\h(S^1, X),\ \tfrac{\pi}{3} \right\}$.
\end{theorem}

Our first proof of this theorem, available in version~1 on arXiv~\cite{adams2024lower}, was longer but relied on even more elementary topological tools, not even the Borsuk--Ulam theorem.

\begin{proof}
This proof is closely related to~\cite{leon,dubins1981equidiscontinuity}.
It suffices to show that if $d_\gh(S^1,X)<\frac{\pi}{3}$, then $d_\gh(S^1,X)\ge d_\h(S^1,X)$.

Let $f\colon S^1\to X$ be a (possibly discontinuous) function.
By~\eqref{eq:dgh-functions}, it suffices to show that if $\frac{1}{2}\dis(f)<\frac{\pi}{3}$, then $\frac{1}{2}\dis(f)\ge d_\h(S^1,X)$, or equivalently that $\frac{\varepsilon+\dis(f)}{2} \ge d_\h(S^1,X)$ for all $\varepsilon>0$ sufficiently small.

Suppose $\frac{1}{2}\dis(f)<\frac{\pi}{3}$.
Fix $\varepsilon>0$ with $5\varepsilon<2\pi-3\dis(f)$.
Form a finite triangulation $K$ of $S^1$ with edges of length less than $\varepsilon$.
Let $K^{(0)}\subseteq S^1$ be the vertex set of $K$.
Since adjacent vertices $v_i,v_{i+1}\in K^{(0)}$ satisfy $d(v_i,v_{i+1})<\varepsilon$, we have $d(f(v_i),f(v_{i+1}))<\varepsilon+\dis(f)<\pi$.
Hence we can map the edge between $v_i$ and $v_{i+1}$ to the unique shortest path in $S^1$ between $f(v_i)$ and $f(v_{i+1})$, obtaining a continuous map $\overline{f}\colon K\to S^1$.

\begin{figure}[h!]
\centering
\begin{tikzpicture}
\draw[line width=8,red,opacity=0.2,domain=15:130] plot ({2*cos(\x)+6.5}, {2*sin(\x)});
\draw[line width=8,red,opacity=0.2,domain=70:170] plot ({2*cos(\x)+6.5}, {2*sin(\x)});
\draw[thick] (0,0) circle (2cm) (6.5,0) circle (2cm);
\draw[->] (2.5,0)--(4,0)node[pos=0.5,above]{$f$};
\fill (canvas polar cs:radius=2cm,angle=30) circle (2pt) (canvas polar cs:radius=2cm,angle=45) circle (2pt) (canvas polar cs:radius=2cm,angle=210) circle (2pt) (canvas polar cs:radius=2cm,angle=225) circle (2pt);
\fill[blue] (canvas polar cs:radius=2cm,angle=40) circle (2pt) (canvas polar cs:radius=2cm,angle=220) circle (2pt);
\draw[blue] (canvas polar cs:radius=2.4cm,angle=40) node{$x$};
\draw[blue] (canvas polar cs:radius=2.4cm,angle=220) node{$-x$};
\draw (canvas polar cs:radius=1.6cm,angle=30) node{$v_i$};
\draw (canvas polar cs:radius=1.8cm,angle=45) node[left]{$v_{i+1}$};
\draw (canvas polar cs:radius=1.6cm,angle=210)node{$v_j$};
\draw (canvas polar cs:radius=1.8cm,angle=225)node[right]{$v_{j+1}$};
\fill (6.5,0)+(canvas polar cs:radius=2cm,angle=15) circle (2pt)
(6.5,0)+(canvas polar cs:radius=2cm,angle=130) circle (2pt)
(6.5,0)+(canvas polar cs:radius=2cm,angle=70) circle (2pt)
(6.5,0)+(canvas polar cs:radius=2cm,angle=170) circle (2pt);
\fill[blue] (6.5,0)+(canvas polar cs:radius=2cm,angle=95) circle (2pt);
\draw[blue] (6.5,0)+(canvas polar cs:radius=2.4cm,angle=95) node{$f(x)=f(-x)$};
\draw (6.5,0)+(canvas polar cs:radius=1.4cm,angle=15) node{$f(v_j)$};
\draw (6.5,0)+(canvas polar cs:radius=1.4cm,angle=130) node{$f(v_{j+1})$};
\draw (6.5,0)+(canvas polar cs:radius=1.4cm,angle=170) node{$f(v_i)$};
\draw (6.5,0)+(canvas polar cs:radius=1.4cm,angle=70) node{$f(v_{i+1})$};
\end{tikzpicture}
\caption{
Proof of Theorem~\ref{thm:circleSufficient}: Since $\overline{f}(x)=\overline{f}(-x)$, the shortest paths in $S^1$ between $f(v_i), f(v_{i+1})$ and $f(v_j), f(v_{j+1})$ intersect.
}
\label{fig:Circle-LoopLifting}
\end{figure}

Let $r=\frac{\varepsilon+\dis(f)}{2}$.
Suppose for a contradiction that $X^r$ is not equal to all of $S^1$.
Then there is an arc of length at least $2r$ in $S^1$ that contains no points from $X$.
So $\overline{f}$ is non-surjective, since the image of each edge in $K$ is an edge of length less than $2r$.
Hence $\overline{f}\colon K\to S^1$ is a map of degree zero.
The Borsuk--Ulam theorem~\cite[Page~206, Exercise~21]{armstrong2013basic} then implies there are antipodal points $x,-x\in K$ with $\overline{f}(x)=\overline{f}(-x)$.
Let $x$ be on the closed edge between $v_i, v_{i+1}\in K^{(0)}$, and let $-x$ be on the closed edge between $v_j, v_{j+1}\in K^{(0)}$.
Since $\overline f(x)=\overline f(-x)$, the shortest path in $S^1$ between $f(v_i), f(v_{i+1})$ intersects the shortest path between $f(v_j), f(v_{j+1})$.
Since these paths in $S^1$ are of length less than $2r$, one of $f(v_i), f(v_{i+1})$ is at a distance less than $r$ from one of $f(v_j), f(v_{j+1})$, i.e.\ $d(f(v),f(v'))<r$ for some $v\in\{v_i,v_{i+1}\}$ and $v'\in\{v_j,v_{j+1}\}$; see Figure~\ref{fig:Circle-LoopLifting}.
We obtain
\[\dis(f)>d(v,v')-d(f(v),f(v'))>(\pi-2\varepsilon)-\tfrac{\varepsilon+\dis(f)}{2}=\pi-\tfrac{\dis(f)}{2}-\tfrac{5\varepsilon}{2}.\]
So $\tfrac{5\varepsilon}{2}>\pi-\frac{3\dis(f)}{2}$, contradicting the choice of $\varepsilon$.
Hence it must be that $X^r=S^1$.

We have $\frac{\varepsilon+\dis(f)}{2}= r\geq d_\h(S^1,X)$ for all $\varepsilon>0$ sufficiently small, completing the proof.
\end{proof}

\begin{figure}[htb]
\centering
\begin{subfigure}{0.45\textwidth}
\centering
\begin{tikzpicture}[scale=1]
\draw[dashed, lightgray] (-2, 0) -- (2, 0);
\draw[dashed, lightgray] (0, 2) -- (0, -2);
\draw[lightgray] (0, 0) circle (2);
\draw[very thick, blue, |-|] (-140:2) arc (-140:-220:2) node[near end,auto] {$\cR^{-1}[a]$};
\draw[very thick, red, |-|] (40:2) arc (40:-40:2) node[near start,auto] {$\cR^{-1}[b]$};
\draw[very thick, black, |-|] (-220:2) arc (-220:-270:2) node[near end,auto] {$\cR^{-1}[c]$};
\draw[very thick, green, |-|] (90:2) arc (90:40:2) node[near start,auto] {$\cR^{-1}[d]$};
\draw[very thick, magenta, |-|] (-40:2) arc (-40:-90:2) node[near end,auto] {$\cR^{-1}[f]$};
\draw[very thick, cyan, |-|] (-90:2) arc (-90:-140:2) node[near start,auto] {$\cR^{-1}[e]$};
\node[fill=white] at (0:2) {\small$\frac{2\pi}{3}-2\varepsilon$};
\node[fill=white] at (62:2) {\small$\frac{\pi}{6}+\varepsilon$};
\node[fill=white] at (-62:2) {\small$\frac{\pi}{6}+\varepsilon$};
\node[fill=white] at (118:2) {\small$\frac{\pi}{6}+\varepsilon$};
\node[fill=white] at (-118:2) {\small$\frac{\pi}{6}+\varepsilon$};
\node[fill=white] at (180:2) {\small$\frac{2\pi}{3}-2\varepsilon$};
\end{tikzpicture}
\subcaption{Correspondence $\cR\subseteq S^1\times X$ has a distortion of $\frac{2\pi}{3}$.
So, $d_\gh(S^1,X)\leq\frac{\pi}{3}$.}
\end{subfigure}\hfill
\begin{subfigure}{0.45\textwidth}
\centering
\begin{tikzpicture}[scale=1.1]
\draw[dashed, lightgray] (-2, 0) -- (2, 0);
\draw[dashed, lightgray] (0, 2) -- (0, -2);
\draw[gray] (0, 0) circle (2);
\fill[green] (0:2) circle (2pt) node[anchor=west] {$d$};
\fill[red] (-30:2) circle (2pt) node[anchor=north west] {$b$};
\fill[blue] (-150:2) circle (2pt) node[anchor=north east] {$a$};
\fill[cyan] (-180:2) circle (2pt) node[anchor=east] {$e$};
\fill[magenta] (50:2) circle (2pt) node[anchor=south west] {$f$};
\fill[black] (-230:2) circle (2pt) node[anchor=south east] {$c$};
\node[fill=white] at (-15:2) {\small$\frac{\pi}{6}-\varepsilon$};
\node[fill=white] at (-90:2) {\small$\frac{2\pi}{3}+2\varepsilon$};
\node[fill=white] at (-165:2) {\small$\frac{\pi}{6}-\varepsilon$};
\node[fill=white] at (25:2) {\small$\frac{\pi}{3}$};
\node[fill=white] at (90:2) {\small$\frac{\pi}{3}$};
\node[fill=white] at (155:2) {\small$\frac{\pi}{3}$};
\end{tikzpicture}
\subcaption{$X=\{a, b, c, d, e, f\}\subseteq S^1$ \\ with $d_\h(S^1,X)=\frac{\pi}{3}+\varepsilon$.}
\end{subfigure}
\caption{
The configuration of points $X\subseteq S^1$ is depicted to show the optimal constant for the circle.
}
\label{fig:non-optimal-1}
\end{figure}

We now prove the optimality of $\frac{\pi}{3}$ for the circle, i.e.\ that $\frac{\pi}{3}$ is also a necessary density condition.

\begin{theorem}
\label{thm:circleOptimal}
For any $\varepsilon\in\left(0,\frac{\pi}{6}\right)$, there exists a nonempty compact subset $X\subseteq S^1$ with $d_\h(S^1, X)=\frac{\pi}{3}+\varepsilon$ and $d_\gh(S^1, X)=\frac{\pi}{3}<d_\h(S^1, X)$.
\end{theorem}

\begin{proof}
We construct $X$ to be a six-element subset of $S^1$.
Both $X$ and a particular correspondence $\cR\subseteq S^1 \times X$ are shown in Figure~\ref{fig:non-optimal-1}.
One can check that $d_\h(S^1,X)=\frac{\pi}{3}+\varepsilon$.
Also, one can check that $\dis(\cR)=\frac{2\pi}{3}$, giving $d_\gh(S^1,X)\leq\frac{\pi}{3}$.
So $d_\gh(S^1,X)<d_\h(S^1,X)$.
Moreover, Theorem~\ref{thm:circleSufficient} implies that $d_\gh(S^1,X)=\frac{\pi}{3}$.
\end{proof}

\section{Metric graphs with loops with coefficient $1$}
\label{sec:graphs}

In this section, we extend our results for the circle to general metric graphs; see Theorem~\ref{thm:graphs}.

\subsection{Distortion of continuous maps between graphs}

We first need the following Borsuk--Ulam theorem for maps from the circle into trees.

\begin{lemma}
\label{lem:BU-circle-tree}
If $T$ is a tree and $f\colon S^1\to T$ is continuous, then there exist $x,x'\in S^1$ with $d(x,x')\ge\frac{2\pi}{3}$ and $f(x)=f(x')$.
\end{lemma}

\begin{proof}
Let $x_0,x_1,x_2\in S^1$ with $d(x_i,x_j)=\frac{2\pi}{3}$ for all $i,j\in\{0,1,2\}$ with $i\neq j$.
There are two cases.

\begin{figure}[htb]
    \centering
    \begin{tikzpicture}
\draw[thick] (0,0) circle (1.5cm);
\fill (canvas polar cs:radius=1.5cm,angle=30) circle (2pt) (canvas polar cs:radius=1.5cm,angle=270) circle (2pt) (canvas polar cs:radius=1.5cm,angle=150) circle (2pt);
\draw (canvas polar cs:radius=1.8cm,angle=30) node{$x_0$};
\draw (canvas polar cs:radius=1.8cm,angle=150) node{$x_1$};
\draw (canvas polar cs:radius=1.8cm,angle=270) node{$x_2$};
\draw [<->,domain=40:140] plot ({1.8*cos(\x)}, {1.8*sin(\x)});
\draw [<->,domain=160:260] plot ({1.8*cos(\x)}, {1.8*sin(\x)});
\draw [<->,domain=280:380] plot ({1.8*cos(\x)}, {1.8*sin(\x)});
\draw (0,1.8) node[above]{$\tfrac{2\pi}{3}$};
\draw (canvas polar cs:radius=2.1cm,angle=210) node{$\tfrac{2\pi}{3}$};
\draw (canvas polar cs:radius=2.1cm,angle=330) node{$\tfrac{2\pi}{3}$};
\draw[->] (2.5,0)--(4,0)node[pos=0.5,above]{$f$};
\draw[thick] (4.5,1.5)--(5.5,0)--(4.5,-1.5) (5.5,0)--(7.5,0)--(8.5,1.5) (7.5,0)--(8.5,-1.5);
\fill (5,-0.75) circle (2pt) node[right]{$f(x_{i+2})$};
\fill (6.8,0) circle (2pt) node[above]{$f(x_i)$};
\fill (8,0.75) circle (2pt) node[right]{$f(x_{i+1})$};
\end{tikzpicture}
\caption{Proof of Lemma~\ref{lem:BU-circle-tree}: The case where some $f(x_i)$ is on the unique shortest path in $T$ between $f(x_{i+1})$ and $f(x_{i+2})$.}
\label{fig:BU-circle-to-tree-1}
\end{figure}

If some $f(x_i)$ is on the unique shortest path in $T$ between $f(x_{i+1})$ and $f(x_{i+2})$ (all indices taken modulo 3), then by the intermediate value theorem there is some $y\in [x_{i+1},x_{i+2}]\subseteq S^1$ with $f(y)=f(x_i)$; see Figure~\ref{fig:BU-circle-to-tree-1}.
Note $d(x_i,y)\ge \frac{2\pi}{3}$, as required.

\begin{figure}[htb]
    \centering
    \begin{tikzpicture}
\draw[thick] (0,0) circle (1.5cm);
\fill (canvas polar cs:radius=1.5cm,angle=30) circle (2pt) (canvas polar cs:radius=1.5cm,angle=270) circle (2pt) (canvas polar cs:radius=1.5cm,angle=150) circle (2pt);
\fill[red] (canvas polar cs:radius=1.5cm,angle=75) circle (2pt) (canvas polar cs:radius=1.5cm,angle=220) circle (2pt) (canvas polar cs:radius=1.5cm,angle=320) circle (2pt);
\draw[red] (canvas polar cs:radius=1.1cm,angle=75) node{$y_0$};
\draw[red] (canvas polar cs:radius=1.1cm,angle=220) node{$y_1$};
\draw[red](canvas polar cs:radius=1.1cm,angle=320) node{$y_2$};
\draw (canvas polar cs:radius=1.8cm,angle=30) node{$x_0$};
\draw (canvas polar cs:radius=1.8cm,angle=150) node{$x_1$};
\draw (canvas polar cs:radius=1.8cm,angle=270) node{$x_2$};
\draw[thick] (3,1.5)--(4,0)--(3,-1.5) (4,0)--(6,0)--(7,1.5) (6,0)--(7,-1.5);
\fill (3.5,-0.75) circle (2pt) node[right]{$f(x_0)$};
\fill (6.5,0.75) circle (2pt) node[right]{$f(x_1)$};
\fill[red] (6,0) circle (2pt) node[below left]{$b$};
\fill (6.5,-0.75) circle (2pt) node[right]{$f(x_2)$};

\draw[thick] (10,0) circle (1.5cm);
\fill (10,0)+(canvas polar cs:radius=1.5cm,angle=30) circle (2pt) (10,0)+(canvas polar cs:radius=1.5cm,angle=270) circle (2pt) (10,0)+(canvas polar cs:radius=1.5cm,angle=150) circle (2pt);
\draw (10,0)+(canvas polar cs:radius=1.8cm,angle=30) node{$x_0$};
\draw (10,0)+(canvas polar cs:radius=1.8cm,angle=150) node{$x_1$};
\draw (10,0)+(canvas polar cs:radius=1.8cm,angle=270) node{$x_2$};
\fill[red] (10,0)+(canvas polar cs:radius=1.5cm,angle=75) circle (2pt) (10,0)+(canvas polar cs:radius=1.5cm,angle=10) circle (2pt) (10,0)+(canvas polar cs:radius=1.5cm,angle=165) circle (2pt);
\draw[red] (10,0)+(canvas polar cs:radius=1.1cm,angle=75) node{$y_0$};
\draw[red] (10,0)+(canvas polar cs:radius=1.1cm,angle=165) node{$y_1$};
\draw[red] (10,0)+(canvas polar cs:radius=1.1cm,angle=10) node{$y_2$};
\end{tikzpicture}
\caption{Proof of Lemma~\ref{lem:BU-circle-tree}: The case where no $f(x_i)$ is on the unique shortest path in $T$ between $f(x_{i+1})$ and $f(x_{i+2})$.}
\label{fig:BU-circle-to-tree-2}
\end{figure}

Alternatively, suppose no $f(x_i)$ is on the unique shortest path in $T$ between $f(x_{i+1})$ and $f(x_{i+2})$ (all indices taken modulo 3); see Figure~\ref{fig:BU-circle-to-tree-2}.
Consider the three shortest paths in $t$ between $f(x_i)$ and $f(x_{i+1})$, for all $i\in\{0,1,2\}$.
These three shortest paths form a tripod in $T$, i.e.\ a graph with one internal vertex and three ends.
Let $b\in T$ be the unique internal vertex (or branching point) of this tripod.
By the intermediate value theorem, for each $i\in\{0,1,2\}$ there exists some $y_i\in[x_i,x_{i+1}]\subseteq S^1$ with $f(y_i)=b$.
We claim $d(y_i,y_{i+1})\ge\frac{2\pi}{3}$ for some $i$.
If two of the these $y_i$ points are antipodal, then certainly the claim is true, and if not then we can consider the three geodesic arcs between the $y_i$ points.
If the union of the three geodesic arcs between the $y_i$ points do not cover $S^1$, as in Figure~\ref{fig:BU-circle-to-tree-2}(right), then one can take $y_i$ and $y_{i+1}$ to be the two endpoints of the geodesic convex hull of $\{y_0,y_1,y_2\}$.
Alternatively, if the union of the three geodesic arcs between the $y_i$ cover $S^1$, as in Figure~\ref{fig:BU-circle-to-tree-2}(left), then $d(y_0,y_1)+d(y_1,y_2)+d(y_2,y_0)=2\pi$ implies $d(y_i,y_{i+1})\ge\frac{2\pi}{3}$ for some $i$.
Hence there exists $i\in\{0,1,2\}$ with  $d(y_i,y_{i+1})\ge\frac{2\pi}{3}$ and $f(y_i)=b=f(y_{i+1})$.
\end{proof}

Note that the lower bound $\frac{2\pi}{3}$ in Lemma~\ref{lem:BU-circle-tree} cannot be improved, as shown in Figure~\ref{fig:circle_to_tripod}.

\begin{figure}[h!]
\centering
\begin{tikzpicture}
\draw[thick] (0,0) circle (2cm);
\draw[gray,->] (canvas polar cs:radius=1.8cm,angle=270)--(canvas polar cs:radius=0.2cm,angle=270); 
\draw[gray,->] (canvas polar cs:radius=1.8cm,angle=30)--(canvas polar cs:radius=0.2cm,angle=30)node[pos=0.5,above]{$f$};
\draw[gray,->] (canvas polar cs:radius=1.8cm,angle=150)--(canvas polar cs:radius=0.2cm,angle=150);
\fill[red] (canvas polar cs:radius=2cm,angle=30) circle (2pt) (canvas polar cs:radius=2cm,angle=270) circle (2pt) (canvas polar cs:radius=2cm,angle=150) circle (2pt);
\draw[red] (canvas polar cs:radius=2.3cm,angle=30) node{$a$};
\draw[red] (canvas polar cs:radius=2.3cm,angle=150) node{$b$};
\draw[red] (canvas polar cs:radius=2.3cm,angle=270) node{$c$};
\draw [<->,domain=35:145] plot ({2.3*cos(\x)}, {2.3*sin(\x)});
\draw [<->,domain=155:265] plot ({2.3*cos(\x)}, {2.3*sin(\x)});
\draw [<->,domain=275:385] plot ({2.3*cos(\x)}, {2.3*sin(\x)});
\draw [domain=120:180,gray,dashed] plot ({3.46*cos(\x)+1.73}, {3.46*sin(\x)-1});
\draw [domain=0:60,gray,dashed] plot ({3.46*cos(\x)-1.73}, {3.46*sin(\x)-1});
\draw [domain=240:300,gray,dashed] plot ({3.46*cos(\x)}, {3.46*sin(\x)+2});
\draw [domain=300:360,gray,dashed] plot ({3.46*cos(\x)-3.46}, {3.46*sin(\x)+2});
\draw [domain=60:120,gray,dashed] plot ({3.46*cos(\x)}, {3.46*sin(\x)-4});
\draw [domain=180:240,gray,dashed] plot ({3.46*cos(\x)+3.46}, {3.46*sin(\x)+2});
\draw[gray,dashed] (0,2)--(1.73,-1)--(-1.73,-1)--(0,2);
\draw (0,2.3) node[above]{$\tfrac{2\pi}{3}$};
\draw (canvas polar cs:radius=2.6cm,angle=210) node{$\tfrac{2\pi}{3}$};
\draw (canvas polar cs:radius=2.6cm,angle=330) node{$\tfrac{2\pi}{3}$};
\fill[blue] (0,0) circle (2pt);
\draw[blue,thick] (canvas cs:x=0,y=0)-- (canvas polar cs:radius=2cm,angle=90) (canvas cs:x=0,y=0)-- (canvas polar cs:radius=2cm,angle=210) (canvas cs:x=0,y=0)-- (canvas polar cs:radius=2cm,angle=330);
\end{tikzpicture}
\caption{The lower bound in Lemma~\ref{lem:BU-circle-tree} is tight.
The function $f$ sends the black circle to the blue tripod.
The points $a$, $b$ and $c$ are at distance $\frac{2\pi}{3}$ apart and in the same fiber of $f$.
}
\label{fig:circle_to_tripod}
\end{figure}

Recall that $G_0$ is the core of a metric graph $G$ (see Section~\ref{sec:background}), and that $e(G_0)$ is the length of the shortest edge in $G_0$.

\begin{lemma}
\label{lemma:geodesic_on_cycle}
Let $(G,d)$ be a metric graph and $\gamma$ be a simple loop.
If $y,y'\in\gamma$ satisfy $d(y,y')<\frac{e(G_0)}{2}$, then there is a unique geodesic joining $y$ and $y'$ and it lies on $\gamma$.
\end{lemma}

\begin{proof}
Let $\eta$ be a simple path connecting $y$ and $y'$.
If $\eta$ is not entirely in $\gamma$, its length is at least $e(G_0)$.
Hence, because $d(y,y')<\frac{e(G_0)}{2}$, a geodesic connecting the two points must lie on $\gamma$; see Figure~\ref{fig:lemma_no_shortcuts}.
Furthermore, this geodesic is unique since the length of a loop is at least $e(G_0)$.
\end{proof}

\begin{figure}[h!]
\centering
\begin{subfigure}{0.45\textwidth}
\centering
\begin{tikzpicture}
\draw (0,0) arc (90:-90:1.5);
\draw (-2,-3) arc (270:90:1.5);
\draw[red,thick,dashed] (0,0.1) arc (90:-90:1.6);
\draw[red,thick,dashed] (-2,-3.1) arc (270:90:1.6);
\draw (0,0)--(-2,0) (0,-3)--(-2,-3);
\draw[red,thick,dashed] (0,0.1)--(-2,0.1) node[pos=1,above]{$\gamma$} (0,-3.1)--(-2,-3.1);
\fill (-0.5,0) circle (2pt) (-0.5,-3) circle (2pt);
\draw (-0.5,0.1) node[above]{$y$};
\draw (-0.5,-3.1) node[below]{$y'$};
\draw (-1,0)--(-1,-3) node[pos=0.5,left]{$\eta$};
\end{tikzpicture}
\caption{}
\end{subfigure}
\begin{subfigure}{0.45\textwidth}
\centering
\begin{tikzpicture}
\draw (0,0) circle (2);
\draw (-2,0)--(2,0);
\draw (-0.7,0.05) node[above]{$y$};
\draw (0.7,0.05) node[above]{$y'$};
\draw[red, thick, dashed] (0,0) circle (2.1);
\draw[red, thick, dashed] (-2.1,-0.1)--(-0.7,-0.1)--(-0.7,0.1)--(-2.1,0.1) (2.1,-0.1)--(0.7,-0.1)--(0.7,0.1)--(2.1,0.1);
\draw[white,ultra thick] (-2.1,-0.1)--(-2.1,0.1) (2.1,-0.1)--(2.1,0.1);
\draw[red] (-1.5,1.5) node[above]{$\gamma$};
\draw (1,-1) node[below]{$G$};
\fill (-0.7,0) circle (2pt) (0.7,0) circle (2pt);
\end{tikzpicture}
\caption{}
\end{subfigure}
\caption{The assumptions in Lemma~\ref{lemma:geodesic_on_cycle} are necessary.
Loop $\gamma$ is represented with a dashed red line.
(A) If there is no bound on $e(G_0)$, then there may exist a shorter path connecting $y$ and $y'$ using a shortcut $\eta$.
(B) If $\gamma$ is not simple, then $y$ and $y'$ can be arbitrarily close and be joined by geodesics that are not in $\gamma$.}
\label{fig:lemma_no_shortcuts}
\end{figure}

The following lemma is the crucial step showing that a continuous map between finite graphs must send simple loops to simple loops, provided that the distortion is sufficiently small compared to the length of the shortest internal edge.

\begin{lemma}
\label{lemma:loop_injection}
\label{lemma:loops_permutation_continuous_map}
Let $\overline f\colon(G,d_G)\to (G',d_{G'})$ be a continuous map between finite metric graphs.
Assume that $\tfrac{\dis(\overline f)}{2}<\tfrac{\min\{e(G_0),e(G'_0)\}}{8}$.
Then $\overline f$ induces an injection $\Phi\colon\mathcal L(G)\to\mathcal L(G')$ between the simple loops of $G$ and the simple loops of $G'$ as follows: given a simple loop $\gamma\in\mathcal L(G)$, $\Phi(\gamma)$ is the only simple loop of $G'$ contained in $\overline f(\gamma)$.
\end{lemma}

\begin{proof}\renewcommand{\qedsymbol}{{\em Proof of Lemma~\ref{lemma:loops_permutation_continuous_map}} $\square$}
For readability purposes, let us divide the proof into several subclaims.
First, let us show that the map $\Phi$ is well-defined, i.e., for every simple loop $\gamma\in\mathcal L(G)$ there is a simple loop of $G'$ contained in $\overline f(\gamma)$ and that simple loop is unique.

\begin{subclaim}[Existence]\label{subclaim:existence}
If $\gamma$ is a simple loop of $G$, then $\overline f(\gamma)$ contains a simple loop of $G'$.
\end{subclaim}

\begin{proof}[Proof of Subclaim~\ref{subclaim:existence}]\renewcommand{\qedsymbol}{{\em Proof of Subclaim~\ref{subclaim:existence}} $\square$}
Suppose for a contradiction that $\overline{f}(\gamma)$ does not contain a loop.
Since $\overline{f}(\gamma)$ is connected, it is a tree.
So Lemma~\ref{lem:BU-circle-tree} applies to give points $x,x'\in \gamma$ with $\overline{f}(x)=\overline{f}(x')$ and $d_\gamma(x,x')\ge \tfrac{\mathrm{length}(\gamma)}{3}$ where $d_\gamma$ is the geodesic distance on $\gamma$.
We must have $d_G(x,x')\geq \tfrac{e(G_0)}{3}$, since if $d_G(x,x')<\tfrac{e(G_0)}{3}<\tfrac{e(G_0)}{2}$, then Lemma~\ref{lemma:geodesic_on_cycle} would give that a geodesic connecting $x$ and $x'$ lies on $\gamma$, and so $d_G(x,x')=d_\gamma(x,x')\ge \tfrac{\mathrm{length}(\gamma)}{3} \geq\tfrac{e(G_0)}{3}$.
We therefore obtain the contradiction
\[
\dis(\overline f)\geq |d_G(x,x')-d_{G'}(\overline{f}(x),\overline{f}(x'))|=d_G(x,x')\geq\tfrac{e(G_0)}{3}>\tfrac{e(G_0)}{4}>\dis(\overline f).
\]
Hence $\overline{f}(\gamma)$ must contain a loop.
\end{proof}

\begin{subclaim}[Uniqueness]\label{subclaim:uniqueness}
If $\gamma$ is a simple loop of $G$, then $\overline f(\gamma)$ contains at most one simple loop.
\end{subclaim}

\begin{proof}[Proof of Subclaim~\ref{subclaim:uniqueness}]\renewcommand{\qedsymbol}{{\em Proof of Subclaim~\ref{subclaim:uniqueness}} $\square$}
Note that $\overline f(\gamma)$ is a closed subset of $G'$ since it is the continuous image of a compact subset.
Let $H=\overline f(\gamma)$, which is a connected subgraph of $G'$.
Furthermore, let $H_0$ be the smallest connected subgraph of $G'$ containing the union of all simple loops of $H$.

Suppose for a contradiction that $\overline f(\gamma)=H$ contains at least two simple loops.
Therefore, at least one vertex of $H_0$ has degree at least $3$.
We can then consider the canonical graph representation of $H_0$ consisting of only vertices of degree at least $3$.
For every $v\in V(H_0)$, the closed subset $(\overline f)^{-1}(v)$ has diameter at most $\dis(\overline f)<\frac{e(G_0)}{2}$.
Hence, the convex hull $C_v$ of 
$(\overline f)^{-1}(v) \cap \gamma$ is well-defined.
Furthermore, for every $v,v'\in V(H_0)$ with $v\neq v'$, we have $C_v\cap C_{v'}=\emptyset$.
Indeed, for $x\in(\overline f)^{-1}(v)$ and $x'\in(\overline f)^{-1}(v')$ with $v\neq v'$, we have
\[
d_G(x,x')\geq d_{G'}(v,v')-\dis(\overline f)\geq e(G'_0)-\dis(\overline f)>\dis(\overline f)\geq\max\{\diam C_v,\diam C_{v'}\}.
\]
Let $\lvert V(H_0)\rvert=n$ and let $\gamma_1,\dots,\gamma_n$ be the non-empty connected components of $\gamma\setminus\bigcup_{v\in V(H_0)}C_v$.
For every $i\in\{1,\dots,n\}$, note that $\overline f(\gamma_i)$ is contained in a connected component of $H\setminus V(H_0)$.

\begin{figure}[h!]
    \centering
    \begin{tikzpicture}
        \draw (0,0) circle (2cm);
        \draw [red,ultra thick,domain=0:20] plot ({2*cos(\x)}, {2*sin(\x)});
        \draw[red] (canvas polar cs:radius=2.4cm,angle=10) node{$C_{v_1}$};
        \draw [red,ultra thick,domain=50:65] plot ({2*cos(\x)}, {2*sin(\x)});         \draw[red] (canvas polar cs:radius=2.4cm,angle=57.5) node{$C_{v_2}$};
       \draw [red,ultra thick,domain=100:120] plot ({2*cos(\x)}, {2*sin(\x)});        \draw[red] (canvas polar cs:radius=2.4cm,angle=110) node{$C_{v_3}$};        
       \draw [red,ultra thick,domain=160:174] plot ({2*cos(\x)}, {2*sin(\x)});        
       \draw [red,ultra thick,domain=204:226] plot ({2*cos(\x)}, {2*sin(\x)});        
       \draw [red,ultra thick,domain=260:280] plot ({2*cos(\x)}, {2*sin(\x)});            \draw [red,ultra thick,domain=310:320] plot ({2*cos(\x)}, {2*sin(\x)});
       \draw[red] (canvas polar cs:radius=2.4cm,angle=315) node{$C_{v_n}$};
       \draw[red] (canvas polar cs:radius=2.4cm,angle=215) node{$\cdots$};
       \draw (canvas polar cs:radius=1.6cm, angle=35) node{$\gamma_1$};
       \draw (canvas polar cs:radius=1.6cm, angle=82.5) node{$\gamma_2$};
       \draw (canvas polar cs:radius=1.6cm, angle=140) node{$\gamma_3$};
       \draw (canvas polar cs:radius=1.6cm, angle=340) node{$\gamma_n$};
       \draw (canvas polar cs:radius=1.6cm, angle=240) node{$\cdots$};
       \draw (-2,2) node{$\gamma$};

       \draw[ultra thick, blue] (8,0) circle (1.5cm);
       \draw[ultra thick,blue] (9.5,0.5) circle (1cm);
       \draw (7,-2)--(7,2) (6,2)--(10,-1.75) (8,-1.5)--(10.5,-1.5);
       \draw[ultra thick,blue] (7,-1.1)--(7,1.1) (8,-1.5)--(9.75,-1.5) (9.75,-1.5)--(7,1.1) (7,-1.1)--(8,0.17);
       \fill[red] (7,1.1) circle (2pt) (8.85,1.24) circle (2pt) (8,-1.5) circle (2pt) (7,-1.1) circle (2pt) (9.4,-0.5) circle (2pt) (9.15,-0.95) circle (2pt) (7.99,0.165) circle (2pt);
       \draw[red] (9.4,-0.5) node[below right]{$v_3$};
       \draw[red] (7,1.1) node[right]{$v_1$};
       \draw[red](8.85,1.24) node[above]{$v_2$};
       \draw[red] (7,-1.1) node[below left]{$v_n$};
       \draw[blue] (11,0) node{$H_0$};
       \draw (6,1.5) node[below]{$H=\overline f(\gamma)$};
       \draw[blue] (8,1.75) node{$\eta_1$};
       \draw[blue] (8.8,0.25) node{$\eta_2$};
       \draw[blue] (9.7,0.5) node{$\eta_3$};
       \draw[blue] (6.2,0) node{$\eta_m$};
        \draw[->] (3,0)--(5,0)node[pos=0.5,above]{$\overline f$};
    \end{tikzpicture}
    \caption{A representation of the objects constructed in the proof of Subclaim~\ref{subclaim:uniqueness}.}
    \label{fig:uniqueness}
\end{figure}

Let $\eta_1,\dots,\eta_m$ be the connected components of $H_0\setminus V(H_0)$.
Note that each $\eta_j$ is an open-ended path in $H_0$.
We claim that $\eta_j\cap\overline f(\gamma_1\cup\cdots\cup\gamma_n)\neq\emptyset$ for every $j\in\{1,\dots,m\}$.
Let $y$ be the midpoint of $\eta_j$.
Since $H=\overline f(\gamma)$, there is $z\in\gamma$ such that $\overline f(z)=y$.
For every $v\in V(H_0)$ and every $x\in (\overline f)^{-1}(v)$, 
\[d_G(z,x)\geq d_{G'}(y,v)-\dis(\overline f)\geq\tfrac{e(G'_0)}{2}-\dis(\overline f)>2\dis(\overline f)-\dis(\overline f)=\dis(\overline f)\geq\diam C_v.\]
Hence, $z\notin C_v$ and so $z\in\gamma_1\cup\cdots\cup\gamma_n$.

So far, we have shown that each $\overline f(\gamma_i)$, for $i\in\{1,\dots,n\}$, is contained in some connected component of $H\setminus V(H_0)$.
Furthermore, among these connected components, each $\eta_j$ with $j\in\{1,\dots,m\}$ contains the $\overline f$-image of at least one $\gamma_i$.
It will be a contradiction if we prove that $n<m$.
Note that the set $\{\eta_j\}_{j=1}^m$ is in bijection with the edge set $E(H_0)$ of $H_0$.
Hence 
\[2m=2\lvert E(G'_0)\rvert=\sum_{v\in V(G'_0)}\deg_{G'_0}(v)\geq 3\lvert V(G'_0)\rvert=3n,\]
giving the contradiction $n<m$.
So it must be that $\overline f(\gamma)$ contains at most one simple loop.
\end{proof}

According to Subclaims~\ref{subclaim:existence} and~\ref{subclaim:uniqueness}, the map $\Phi$ is well-defined.
Lastly, we show that it is injective.

\begin{subclaim}
\label{subclaim:injective}
The map $\Phi\colon\mathcal L(G)\to\mathcal L(G')$ is injective.
\end{subclaim}

\begin{proof}[Proof of Subclaim~\ref{subclaim:injective}]\renewcommand{\qedsymbol}{{\em Proof of Subclaim~\ref{subclaim:injective}} $\square$}
Let $\gamma_1$, $\gamma_2$, and $\eta$ be three simple loops such that $\eta\subseteq\overline f(\gamma_1)\cap\overline f(\gamma_2)$.
Suppose, by contradiction, that $\gamma_1$ and $\gamma_2$ are distinct.
Since they are simple, they differ by at least one edge.
Without loss of generality, let us fix an edge $e\in E(G)$ that is contained in $\gamma_1$ but whose interior is disjoint from $\gamma_2$.
Fix the midpoint $m$ of $e$.
In particular, $d_G(m,\gamma_2)\geq \frac{e(G_0)}{2}$.

\begin{figure}[h!]
    \centering
    \begin{tikzpicture}
        \draw (0,0) circle (1.5cm) (3,0) circle (1.5cm);
        \draw[red,ultra thick] (9.5,0) circle (1.5cm);
        \draw[red] (9.5,1.5) node[above]{$\eta$};
        \draw[->] (5,0)--(6.5,0)node[pos=0.5,above]{$\overline f$};
        \draw[ultra thick,red,domain=300:420] plot ({1.5*cos(\x)}, {1.5*sin(\x)});
        \draw[ultra thick,red,domain=425:430] plot ({1.5*cos(\x)}, {1.5*sin(\x)});
        \draw[ultra thick,red,domain=435:440] plot ({1.5*cos(\x)}, {1.5*sin(\x)});
        \draw[ultra thick,red,domain=280:290] plot ({1.5*cos(\x)}, {1.5*sin(\x)});
        \draw[ultra thick,red,domain=130:220] plot ({1.5*cos(\x)+3}, {1.5*sin(\x)});
        \draw[line width=8,blue,opacity=0.2,domain=80:280] plot ({1.5*cos(\x)}, {1.5*sin(\x)});
        \draw[blue] (canvas polar cs:radius=1.8cm,angle=120) node[left]{$C$};
        \draw[red] (1.4,0) node[left]{$\overline f^{-1}(\eta)$};
        \fill (-1.5,0) circle (2pt);
        \draw (-1.5,0) node[left]{$m$};
        \fill (canvas polar cs:radius=1.5cm,angle=80) circle (2pt)  (canvas polar cs:radius=1.5cm,angle=280) circle (2pt);
        \draw (canvas polar cs:radius=1.5cm,angle=80) node[above]{$x_1$};
        \draw (canvas polar cs:radius=1.5cm,angle=280) node[below]{$x_2$};
        \draw (canvas polar cs:radius=1.7cm,angle=240) node[below]{$\gamma_1$};
        \draw (3,0)+(canvas polar cs:radius=1.7cm,angle=300) node[below]{$\gamma_2$};
        \draw (9.5,0)+(canvas polar cs:radius=1.5cm,angle=120)--(8,1.75)--(7.3,1.5);
        \draw (8,1.75)--(7.2,2.3)node[pos=0.5,above right]{$\overline f(\gamma_2)$};
        \draw (9.5,0)+(canvas polar cs:radius=1.5cm,angle=230)--(7.4,-2);
        \draw (9.5,0)+(canvas polar cs:radius=1.5cm,angle=290)--(10.5,-2.5);
        \draw (9.5,0)+(canvas polar cs:radius=1.5cm,angle=30)--(11.5,1)--(12.5,0.5) (11.5,1)--(11.7,2)--(12.5,2.1) (11.7,2)--(12,2.5);
        \fill (9.5,0)+(canvas polar cs:radius=1.5cm,angle=30) circle (2pt);
        \draw (9.5,0)+(canvas polar cs:radius=1.5cm,angle=30) node[left]{$z=\overline f(x_1)=\overline f(x_2)$};
        \draw
        [line width=8,blue,opacity=0.2] (9.5,0)+(canvas polar cs:radius=1.5cm,angle=30)--(11.5,1)--(12.5,0.5) (11.5,1)--(11.7,2)--(12.5,2.1) (11.7,2)--(12,2.5);
        \draw[blue] (12.5,0.5) node[below]{$D\supseteq \overline f(C)$};
    \end{tikzpicture}
\caption{A representation of the objects constructed in the proof of Subclaim~\ref{subclaim:injective}.}
\label{fig:injectivity}
\end{figure}

Let us estimate $d_G(m,(\overline f)^{-1}(\eta))$.
Suppose that $x\in(\overline f)^{-1}(\eta)$, and fix $y\in\gamma_2$ such that $\overline f(x)=\overline f(y)$.
Note that $d_G(x,y)\leq\dis(\overline f)$.
Using the triangle inequality, we obtain
\begin{equation}
\label{eq:dist_f-1eta}
\begin{aligned}
d_G(m,x)\,&\geq d_G(m,y)-d_G(x,y)\geq d_G(m,\gamma_2)-\dis(\overline f)\\
&\geq\tfrac{e(G_0)}{2}-\dis(\overline f)>2\dis(\overline f)-\dis(\overline f)=\dis(\overline f).
\end{aligned}
\end{equation}
Hence $d(m,(\overline f)^{-1}(\eta))\geq\dis(\overline f)$, and in particular, $\overline f(m)\notin\eta$.

Consider the connected component $C$ of $\gamma_1\setminus(\overline f)^{-1}(\eta)$ containing $m$.
Since $(\overline f)^{-1}(\eta)$ is closed by continuity, $C$ is an open arc in $\gamma_1$.
Let $x_1$ and $x_2$ be the two points of $\gamma_1$ on the boundary of $C$.
Note that they both belong to $(\overline f)^{-1}(\eta)$, and so $d_G(m,x_i)
>\dis(\overline f)$ by~\eqref{eq:dist_f-1eta}.

Let us estimate $d_G(x_1,x_2)$.
A geodesic connecting $x_1$ and $x_2$ satisfies one of the following:
\begin{compactenum}[(i)]
    \item either it passes through $m$,
    \item or it crosses $(\overline f)^{-1}(\eta)\cap\gamma_1$,
    \item or it uses an edge that is not in $\gamma_1$.
\end{compactenum}
In all three cases, we have $d_G(x_1,x_2)>\dis(\overline f)$.
For (i), this follows since $d_G(m,x_i)>\dis(\overline f)$.
For (ii), let $A\coloneqq(\overline f)^{-1}(\eta)\cap\gamma_1$.
Since $\overline f(\gamma_1)$ contains $\eta$, we have $\overline f(A)=\eta$.
The diameter of $\eta$ is at least $\tfrac{e(G'_0)}{2}$, and hence
\[\diam(A) \ge \tfrac{e(G'_0)}{2}-\dis(\overline f)  > 2\dis(\overline f)-\dis(\overline f) = \dis(\overline f).\]
Finally, we also have $d_G(x_1,x_2)>\dis(\overline f)$ in case (iii) since $e(G_0) > \dis(\overline f)$.

Since $\overline f$ is continuous, $\overline f(C)$ is contained in a connected component $D$ of $\overline f(\gamma_1)\setminus\eta$.
Since $\overline f(\gamma_1)$ contains only one loop, $D$ is a tree.
Furthermore, $\overline f(\gamma_1)$ is a closed subset of $G'$ since it is the continuous image of a compact subset, and it contains only one simple loop.
Thus, the closure of $D$ in $G'$ intersects $\eta$ in only one point $z$.
We have already observed that $\overline f(x_i)\in\eta$ for $i\in\{1,2\}$.
Furthermore, $\overline f(x_i)$ belongs to the closure of $\overline f(C)$ in $G'$, which is contained in the closure of $D$.
Therefore, $\overline f(x_i)=z$ since the latter is the only point in the closure of $D$ that lies on $\eta$.
This gives $\overline f(x_1)=z=\overline f(x_2)$, which contradicts the above established inequality $d_G(x_1,x_2)>\dis(\overline f)$.
\end{proof}

This completes the proof of Lemma~\ref{lemma:loops_permutation_continuous_map}.
\end{proof}

\subsection{Distortion of arbitrary functions between graphs}

To apply Lemma~\ref{lemma:loops_permutation_continuous_map} in the context of Gromov--Hausdorff distances, we will need a way to turn arbitrary functions into nearby continuous ones with controlled distortion, as described in the following result.

\begin{lemma}
\label{lemma:from_discontinuous_to_continuous}
Let $f\colon (G,d_G)\to (G',d_{G'})$ be a function between finite metric graphs with distortion $\tfrac{\dis(f)}{2}<r<\tfrac{\min\{e(G_0),e(G'_0)\}}{4}$.
Then there is a continuous map $\overline f\colon G\to G'$ such that $\tfrac{\dis(\overline f)}{2}<3r$ and $\overline f(G)\subseteq (f(G))^{r}$.
\end{lemma}

\begin{proof}
We organize the proof into three steps, to help its readability.
Let $\varepsilon>0$ satisfy $\varepsilon+\dis(f)<2r$.

\smallskip

\noindent {\bf Step 1: Construct a continuous map $\overline f$.}
Fix a finite triangulation $K$ of $G$ with edges of length at most $\varepsilon$ such that $V(G)\subseteq K^{(0)}$.
Let us construct a continuous map $\overline f\colon K\to G'$ as follows.
For every vertex $x\in K^{(0)}$, set $\overline f(x)=f(x)$.
Consider now two adjacent vertices $x$ and $y$ in $K^{(0)}$.
Send the segment from $x$ to $y$ to the geodesic segment connecting $f(x)$ and $f(y)$ at uniform speed.
The geodesic between the two points $f(x)$ and $f(y)$ is unique since $d_{G'}(f(x),f(y))\leq\varepsilon+\dis(f)<\tfrac{e(G'_0)}{2}$.

\smallskip

\noindent {\bf Step 2: Estimate the distortion of $\overline f$.}
We claim that $\dis(\overline f)\leq 3(\dis(f)+\varepsilon)<6r$.
The second inequality is trivial.
Let $x,y\in K$.
By construction of $K$, we can let $x_i,y_j\in K^{(0)}$ be such that $d_G(x,x_i)\leq\frac{\varepsilon}{2}$ and $d_G(y,y_j)\leq\frac{\varepsilon}{2}$.
By construction of $\overline f$, we have $d_{G'}(\overline f(x),\overline f(x_i))\leq \dis(f)+\varepsilon$ since $\overline f(x)$ is contained in the geodesic between $f(x_i)$ and $f(x_{i+1})$, where $x$ is contained in the edge $\{x_i,x_{i+1}\}$ of $K$.
Similarly, $d_{G'}(\overline f(y),\overline f(y_j))\leq\dis(f)+\varepsilon$.
By applying the triangle inequality, we obtain the following chain of inequalities:
\begin{align*}
    d_{G'}(\overline f(x),\overline f(y))\,&\leq d_{G'}(\overline f(x),\overline f(x_i))+d_{G'}(f(x_i),f(y_j))+d_{G'}(\overline f(y_j),\overline f(y))\\
    &\leq \dis(f)+\varepsilon+ d_G(x_i,y_j)+\dis(f)+\dis(f)+\varepsilon\\
    &\leq 3\dis(f)+2\varepsilon +d_G(x_i,x)+d_G(x,y)+d_G(y,y_j)\\
    &\leq 3(\dis(f)+\varepsilon)+d_G(x,y)\\
    &<6r+d_G(x,y).
\end{align*}
With similar computations, we can show that $d_G(x,y)-d_{G'}(\overline f(x),\overline f(y))\leq 3(\dis(f)+\varepsilon)<6r$.
This proves $\tfrac{\dis(\overline f)}{2}<3r$.

\smallskip

\noindent {\bf Step 3: Show $\overline f(K)\subseteq(f(G))^r$.}
For every point $\overline f(x)\in \overline f(K)$, choose a closest point $x_i \in K^{(0)}$ to $x$.
Let $x_{i+1}\in K^{(0)}$ be such that $x$ is contained in the edge $\{x_i,x_{i+1}\}$.
Thus, by construction of $\overline f$, 
\[
d_{G'}(\overline f(x),f(x_i))\leq\frac{d_{G'}(f(x_i),f(x_{i+1}))}{2}\leq \frac{\varepsilon+\dis(f)}{2}<r\qedhere
\]
\end{proof}

We finally obtain that the core $G_0$ of a graph $G$ is preserved by a function with distortion bounded by a fraction of the shortest edge length.

\begin{proposition}
\label{prop:G_0_in_f(G_0)}
Let $G$ be a finite metric graph and let $f\colon G\to G$ be a function with $\tfrac{\dis(f)}{2}<r<\tfrac{e(G_0)}{24}$.
Then $G_0\subseteq f(G_0)^{r}$.
\end{proposition}

\begin{proof}
Let $\overline f\colon G\to G$ be the continuous map described in Lemma~\ref{lemma:from_discontinuous_to_continuous}.
Then $\tfrac{\dis(\overline f)}{2}<3r<\tfrac{e(G_0)}{8}$.
The restriction $\overline f|_{G_0}\colon G_0\to G$ is also continuous and has the same bound on its distortion.
We can then apply Lemma~\ref{lemma:loop_injection} to $\overline f|_{G_0}$ and obtain an injection $\Phi\colon\mathcal L(G_0)\to\mathcal L(G)$.
Since $\mathcal L(G_0)=\mathcal L(G)$ by construction and these sets are finite, $\Phi$ is a bijection.
Hence, every simple loop of $G$ is contained in $\overline f(G_0)\subseteq f(G_0)^r$ according to Lemma~\ref{lemma:from_discontinuous_to_continuous}.
Given that $f(G_0)^{r}$ is connected, it contains $G_0$.
\end{proof}

As an immediate consequence of Proposition~\ref{prop:G_0_in_f(G_0)} we obtain that the Hausdorff and the Gromov--Hausdorff distances between a graph with no leaves and a subset thereof coincide, provided that the latter is sufficiently small.

\begin{corollary}
\label{cor:no-leaves}
Let $G$ be a finite metric graph with no leaves, and let $X\subseteq G$.
If $d_\gh(G,X)<\tfrac{e(G_0)}{24}$, then $d_\gh(G,X)=d_\h(G,X)$.
\end{corollary}

\begin{proof}
Let $r>0$ satisfy $d_\gh(G,X)<r<\tfrac{e(G_0)}{24}$.
Then there is a map $f\colon G\to X$ whose distortion satisfies $\tfrac{\dis(f)}{2}<r<\tfrac{e(G_0)}{24}$.
We can apply Proposition~\ref{prop:G_0_in_f(G_0)} to obtain that $G=G_0\subseteq f(G_0)^r\subseteq X^r$.
Hence $d_\h(G,X)\leq r$.
Since $r$ can be taken arbitrarily, we have $d_\h(G,X)\leq d_\gh(G,X)$.
\end{proof}

In the rest of the paper, we discuss how to lower bound the Gromov--Hausdorff distance with respect to the Hausdorff distance for graphs with leaves.
The first result relies on hypotheses around the subset $X$ of the graph $G$.
More precisely, similarly to the tree case described in Proposition~\ref{thm:tree_via_maps} and Theorem~\ref{thm:maps_and_trees}, we request that the subset $X$ is sufficiently close to the leaves of $G$.

\begin{proposition}
\label{prop:HvsGH_graphs}
Let $(G,d)$ be a finite metric graph
and let $f\colon G\to G$ be a (possibly discontinuous) function with $\tfrac{\dis(f)}{2}<\tfrac{e(G_0)}{24}$.
If $\partial G\neq \emptyset$, suppose that $d_\h(G,f(G))>\vec{d}_\h(\partial G,f(G))$.
Then $d_\h(G,f(G))\leq \tfrac{\dis(f)}{2}$.
\end{proposition}



\begin{proof}
Let $r>0$ satisfy $\tfrac{\dis(f)}{2}<r<\tfrac{e(G_0)}{24}$.
Let $\overline x\in G$ be a point such that $B(\overline x, d_\h(G,f(G)))\cap f(G)=\emptyset$.
If $\overline x\in G_0$, then $\overline x\in f(G)^r$ according to Proposition~\ref{prop:G_0_in_f(G_0)}, and so $d_\h(G,f(G))<r$.
Suppose otherwise that $\overline x\notin G_0$.
Thus, $G\setminus\{\overline x\}$ is disconnected, and at least two connected components thereof have non-empty intersection with $X$ by the assumption.
We can conclude as in Proposition~\ref{thm:tree_via_maps}.
\end{proof}

\begin{theorem}
\label{thm:graphs}
Let $G$ be a finite connected metric graph and let $X\subseteq G$.
If $\partial G\neq \emptyset$, suppose $d_\h(G,X)>\vec{d}_\h(\partial G,X)$.
If $d_\gh(G,X)<\tfrac{e(G_0)}{24}$, then $d_\gh(G,X)= d_\h(G,X)$.
\end{theorem}

\begin{proof}
As in the proof of Theorem~\ref{thm:maps_and_trees}, by Lemma~\ref{lemma:H_GH_and_compactness} we can assume that $X$ is compact by replacing it with its closure $\overline X$ if necessary.
Suppose, by contradiction, that $d_\gh(G,X)<d_\h(G,X)$.
By Lemma~\ref{lemma:map_close_to_the_leaves} there is a function $f\colon G\to X$ such that $\tfrac{1}{2}\dis(f)<\min\{d_\h(G,X),\tfrac{e(G_0)}{24}\}$ and $\vec d_\h(\partial G,X)=\vec d_\h(\partial G,f(G))$.
Note that $d_\h(G,f(G))\geq d_\h(G,X)>\vec d_\h(\partial G,X)=\vec d_\h(\partial G,f(G))$.
We can apply Proposition~\ref{prop:HvsGH_graphs} to obtain a contradiction with the following chain of inequalities:
\[
d_\h(G,X)\leq d_\h(G,f(G))\leq \tfrac{\dis(f)}{2}\leq d_{\gh}(G,X).
\qedhere
\]
\end{proof}


\section{Fractional bounds with no assumption on the subset of the graph}
\label{sec:no-boundary-assumptions}

The results in Sections~\ref{sec:trees}--\ref{sec:graphs} provide conditions under which $d_\gh(G,X)=d_\h(G,X)$ for a subset $X$ of a metric graph $G$.
They rely on the assumption that $d_\h(G,X)>\vec{d}_\h(\partial G,X)$.
This assumption, although generic for uniform samples (Proposition~\ref{prop:assumption_on_leaves_generic}), is never satisfied if $X$ is a connected proper subgraph of a metric tree $G$, or a sufficiently dense sample thereof.
A motivating example could be $T$ and $X$ from Figure~\ref{Fig:1}, to which none of our previous results apply.
An even simpler example would be the interval $X=[0,t]$ within $G=[0,1]$ for $t\in [0,1)$.
It is clear that $d_\h(G,X) = 1-t = 2 d_\gh(G,X)$; see Corollary~\ref{coro:segment}.
So only a fractional result $d_\gh(G,X)\ge\frac{1}{2}d_\h(G,X)$ is attainable in this case.

In this section, we prove fractional bounds: we show that $d_\gh(G,X)$ is greater than or equal to a fraction of $d_\h(G,X)$ under more general hypotheses.
In particular, we pose no assumptions on $X$, at the cost of weaker fractional bounds on $d_\gh(G,X)$.

We divide the section into two parts: in Subsection~\ref{sub:trees_no_assumptions}, we prove two such results for trees, and then we extend those techniques to adapt them to arbitrary graphs in Subsection~\ref{sub:graphs_no_assumptions}.

We will make extensive use of the following simple result.

\begin{lemma}
\label{lemma:ziga}
Let $Y$ be a subset of a metric space $X$.
Let $f\colon X\to X$ be a function and let $\varepsilon> 0$.
If $x\in X$ satisfies $f(x)\in f(Y)^\varepsilon$, then $d(x,Y)<\varepsilon+\dis(f)$.
\end{lemma}

\begin{proof}
Indeed, if there is a point $y\in Y$ satisfying $d(f(x),f(y))<\varepsilon$, then $d(x,y)<\varepsilon+\dis(f)$.
\end{proof}

For a metric space $X$, we denote by $\mathcal D_X$ its {\em distance set}, which is the set of all the nonzero distances $d(x,x')$ where $x,x'\in X$.
Furthermore, for every $x\in X$, we let $\mathcal D_X(x)$ denote the {\em local distance set at $x$}, namely $\mathcal D_X(x)=\{d(x,x')\mid x'\in X\}$.

For a finite subset $Y\subseteq \R$, let $\sep Y=\min_{y,y'\in Y,\,y\neq y'} |y-y'|$ be the {\em separation of $Y$}.
When $\mathcal D_X$ is a finite set, we will sometimes consider $\sep \mathcal D_X$.

For a finite graph $G$ with leaves, we let $\lambda(G)$ be the length of the shortest leaf edge.

\subsection{Trees with no assumptions on the subset}
\label{sub:trees_no_assumptions}

\begin{proposition}
\label{prop:trees_maximal_geodesics}
For $T$ a finite tree, let function $f\colon T\to T$ satisfy $\tfrac{\dis(f)}{2}<r<\min\bigl\{\tfrac{\lambda(T)}{4},\tfrac{\sep\mathcal D_{\partial T}}{2}\bigr\}$.
Then $T\subseteq f(T)^{2r}$.
\end{proposition}

\begin{proof}
Let us arrange the elements of $\mathcal D_{\partial T}$ in decreasing order: $c_1>c_2>\cdots>c_n$.
For every $i\in\{1,\dots,n\}$, define $\partial_iT=\{v\in\partial T\mid\max\mathcal D_{\partial T}(v)=c_i\}$.
Clearly, $\{\partial_iT\}_i$ forms a partition of $\partial T$.

Let us denote by $T_1$ the convex hull of $\partial_1T$, and by $T_{i+1}$ the union of geodesics connecting $\partial_{i+1}T$ to $\bigcup_{j\leq i}T_j$; see Figure~\ref{fig:trees_partition}.
Note that $\partial_1T$ contains at least two leaves, so its convex hull is not a singleton.

We intend to iteratively construct bijections $\Phi_i\colon\partial_iT\to\partial_iT$ for $i\in\{1,\dots,n\}$ such that for every $v\in\partial_iT$, we have $d(f(v),\Phi_i(v))<2r$.
Once this is accomplished, then for every $i\in\{1,\dots,n\}$, we have $\bigcup_{j\leq i}T_j\subseteq f(\bigcup_{j\leq i}T_j)^{2r}$ since the latter is path connected (Lemma~\ref{lemma:path_connected_distortion2}) and contains all its leaves, i.e., the set $\bigcup_{j\leq i}\partial_jT$.
Hence, the claim follows since $T=\bigcup_{j\leq n}T_j$.

\begin{figure}[h!]
\centering
\begin{subfigure}{0.23\textwidth}
\centering
\begin{tikzpicture}[scale=0.72]
\draw (-2,0)--(3,0) (-1,-1.5)--(-1,1) (-1,-1)--(-1.5,-1) (0,1)--(0,-1) (1.5,-1.5)--(1.5,1.5) (1,1)--(2.5,1);
\draw[ultra thick,red,dashed] (-1.5,-1)--(-1,-1) (-1,-1.5)--(-1,0)--(1.5,0)--(1.5,1)--(2.5,1);
\draw[red] (0.75,0) node[below]{$T_1$};
\draw[red] (-1,-1.5) circle (3.5pt) (-1.5,-1) circle (3.5pt) (2.5,1) circle (3.5pt);
\draw (-2,0) node{\tiny$\square$};
\draw (-1,1) node{\tiny$\square$};
\draw (0,1) node{\tiny$\triangle$};
\draw (0,-1) node{\tiny$\triangle$};
\draw (1.5,1.5) node{\small$\times$};
\draw (1,1) node{\small$\times$};
\draw (3,0) node{\small$\times$};
\draw (1.5,-1.5) node{\small$\times$};
\draw (-1.9,0.9) node{$T$};
\end{tikzpicture}
\end{subfigure}\quad
\begin{subfigure}{0.23\textwidth}
\centering
\begin{tikzpicture}[scale=0.72]
\draw (-2,0)--(3,0) (-1,-1.5)--(-1,1) (-1,-1)--(-1.5,-1) (0,1)--(0,-1) (1.5,-1.5)--(1.5,1.5) (1,1)--(2.5,1);
\draw[ultra thick,red] (-1.5,-1)--(-1,-1) (-1,-1.5)--(-1,0)--(1.5,0)--(1.5,1)--(2.5,1);
\draw[ultra thick,red,dashed] (1,1)--(1.5,1)--(1.5,1.5) (3,0)--(1.5,0)--(1.5,-1.5);
\draw[red] (-1,-1.5) circle (3.5pt) (-1.5,-1) circle (3.5pt) (2.5,1) circle (3.5pt);
\draw (-2,0) node{\tiny$\square$};
\draw (-1,1) node{\tiny$\square$};
\draw (0,1) node{\tiny$\triangle$};
\draw (0,-1) node{\tiny$\triangle$};
\draw[red] (1.5,1.5) node{\small$\times$};
\draw[red] (1,1) node{\small$\times$};
\draw[red] (3,0) node{\small$\times$};
\draw[red] (1.5,-1.5) node{\small$\times$};
\draw[red] (1.5,-1) node[right]{$T_2$};
\end{tikzpicture}
\end{subfigure}
\quad
\begin{subfigure}{0.23\textwidth}
\centering
\begin{tikzpicture}[scale=0.72]
\draw (-2,0)--(3,0) (-1,-1.5)--(-1,1) (-1,-1)--(-1.5,-1) (0,1)--(0,-1) (1.5,-1.5)--(1.5,1.5) (1,1)--(2.5,1);
\draw[ultra thick,red] (-1.5,-1)--(-1,-1) (-1,-1.5)--(-1,0)--(1.5,0)--(1.5,1)--(2.5,1) (1,1)--(1.5,1)--(1.5,1.5) (3,0)--(1.5,0)--(1.5,-1.5);
\draw[ultra thick,red,dashed] (-2,0)--(-1,0)--(-1,1);
\draw[red] (-1,-1.5) circle (3.5pt) (-1.5,-1) circle (3.5pt) (2.5,1) circle (3.5pt);
\draw[red] (-2,0) node{\tiny$\square$};
\draw[red] (-1,1) node{\tiny$\square$};
\draw (0,1) node{\tiny$\triangle$};
\draw (0,-1) node{\tiny$\triangle$};
\draw[red] (1.5,1.5) node{\small$\times$};
\draw[red] (1,1) node{\small$\times$};
\draw[red] (3,0) node{\small$\times$};
\draw[red] (1.5,-1.5) node{\small$\times$};
\draw[red] (-1,0.75) node[left]{$T_3$};
\end{tikzpicture}
\end{subfigure}\quad
\begin{subfigure}{0.23\textwidth}
\centering
\begin{tikzpicture}[scale=0.72]
\draw (-2,0)--(3,0) (-1,-1.5)--(-1,1) (-1,-1)--(-1.5,-1) (0,1)--(0,-1) (1.5,-1.5)--(1.5,1.5) (1,1)--(2.5,1);
\draw[ultra thick,red] (-1.5,-1)--(-1,-1) (-1,-1.5)--(-1,0)--(1.5,0)--(1.5,1)--(2.5,1) (1,1)--(1.5,1)--(1.5,1.5) (3,0)--(1.5,0)--(1.5,-1.5) (-2,0)--(-1,0)--(-1,1);
\draw[ultra thick,red,dashed] (0,1)--(0,-1);
\draw[red] (-1,-1.5) circle (3.5pt) (-1.5,-1) circle (3.5pt) (2.5,1) circle (3.5pt);
\draw[red] (-2,0) node{\tiny$\square$};
\draw[red] (-1,1) node{\tiny$\square$};
\draw[red] (0,1) node{\tiny$\triangle$};
\draw[red] (0,-1) node{\tiny$\triangle$};
\draw[red] (1.5,1.5) node{\small$\times$};
\draw[red] (1,1) node{\small$\times$};
\draw[red] (3,0) node{\small$\times$};
\draw[red] (1.5,-1.5) node{\small$\times$};
\draw[red] (0,-0.75) node[right]{$T_4$};
\end{tikzpicture}
\end{subfigure}
\caption{A finite tree $T$ and its partition $T_1,T_2,T_3,T_4$ as in the proof of Proposition~\ref{prop:trees_maximal_geodesics}.
The leaves in $\partial_1T$, $\partial_2T$, $\partial_3T$ and $\partial_4T$ are represented with circles, crosses, squares, and triangles, respectively.
The dashed lines are $T_1$, $T_2$, $T_3$ and $T_4$.}
\label{fig:trees_partition}
\end{figure}

Before describing the iterative construction, we formalize the following important observation: given $x\in T_i$ and $y\in T_j$,
\begin{equation}
\label{eq:diam}
d(x,y)\leq\min\{\max\mathcal D_{\partial_iT},\max\mathcal D_{\partial_jT}\}=\min\{c_i,c_j\}=c_{\max\{i,j\}}.
\end{equation}

\noindent{\bf Base step.} Let $x\in\partial_1T$.
Pick $y\in\partial_1T$ such that $d(x,y)=c_1$.
Since $\sep\mathcal D_{\partial T}>2r$,
\[
d(f(x),f(y))\geq c_1-\dis(f)>c_1-2r>c_2.
\]
Therefore, $f(x)$ (and similarly $f(y)$) belongs to a geodesic connecting two leaves in $\partial_1T$.
Indeed, if it were not the case, \eqref{eq:diam} would imply that $d(f(x),f(y))\leq c_2$.
Since $d(f(x),f(y))>c_1-2r$, there is an $x'\in\partial_1T$ with $d(f(x),x')\leq\dis(f)<2r$.
Such an $x'$ is the unique leaf with that property since the distance between every two leaves is strictly greater than $4r$.
Note that $\min\mathcal D_{\partial T}\geq2\lambda(T)$.
Define $\Phi_1(x)=x'$.
We claim that $\Phi_1$ is injective.
Indeed, if $\Phi_1(x)=\Phi_1(y)$ for $x,y\in\partial_1T$, then
\[
d(x,y)\leq d(f(x),f(y))+\dis(f)\leq d(f(x),\Phi_1(x))+d(\Phi_1(y),f(y))+\dis(f)<6r<2\lambda(T)\leq\min\mathcal D_{\partial T},
\]
and so $x=y$.
Since $\partial_1T$ is finite, $\Phi_1$ is bijective.

\noindent{\bf Recursive step.} Assume that bijections $\Phi_1,\dots,\Phi_i$ with the desired properties have been constructed.
Let $x\in\partial_{i+1}T$.
Note that $d(x,\bigcup_{j\leq i}T_j)\geq\lambda(T)>4r$ and so $f(x)\notin \bigcup_{j\leq i}T_j\subseteq f(\bigcup_{j\leq i}T_j)^{2r}$ according to Lemma~\ref{lemma:ziga} (take $\varepsilon=2r$).
Let $y\in\partial T$ be such that $d(x,y)=c_{i+1}$.
If $f(x)$ belongs to some $T_j$ with $j>i+1$, then \eqref{eq:diam} implies the contradiction
\[
c_{i+1}-2r< d(x,y)-\dis(f)\leq d(f(x),f(y))\leq c_j\leq c_{i+1}-\sep\mathcal D_{\partial T}<c_{i+1}-2r.
\]
Therefore, $f(x)\in T_{i+1}$.
Since $d(f(x),f(y))> c_{i+1}-2r$, there is some $x'\in\partial_iT$ with $d(f(x),x')<2r$.
Set $\Phi_{i+1}(x)=x'$.
Similarly to the base step, we can show that $\Phi_{i+1}$ is indeed well-defined and injective, thus concluding the proof.
\end{proof}

\begin{theorem}
\label{thm:trees_H_vs_GH_maximal_geodesics}
Let $T$ be a finite tree and let $X$ be a subset thereof.
Then
\[
d_\gh(T,X)\geq \min\left\{\tfrac{1}{2}d_\h(T,X),\tfrac{\lambda(T)}{4},\tfrac{\sep\mathcal D_{\partial T}}{2}\right\}.
\]
\end{theorem}

\begin{proof}
As we have already done in previous proofs, we can assume that $X$ is compact without loss of generality according to Lemma~\ref{lemma:H_GH_and_compactness}.
Suppose that $d_\gh(T,X)<r<\min\{\tfrac{\lambda(T)}{4},\tfrac{\sep\mathcal D_{\partial T}}{2}\}$.
Then, there is a function $f\colon T\to X\subseteq T$ with $\tfrac{1}{2}\dis(f)<r$.
The application of Proposition~\ref{prop:trees_maximal_geodesics} implies that $T\subseteq f(T)^{2r}$, and so $d_\h(T,X)\leq d_\h(T,f(T))<2r$.
\end{proof}

\subsection{Graphs with no assumptions on the subset}
\label{sub:graphs_no_assumptions}

In this subsection, we adapt the results in Section~\ref{sub:trees_no_assumptions} for graphs with loops.
The crucial tool for these adaptations is Proposition~\ref{prop:G_0_in_f(G_0)}.
We describe the results for finite graphs with leaves that are not trees, explicitly ruling out the circle case, which has been fully discussed in the pair of Theorems~\ref{thm:circleSufficient} and~\ref{thm:circleOptimal}, respectively.

\begin{proposition}
\label{prop:graphs_maximal_geodesics}
Let $G$ be a finite metric graph with leaves, which is not a tree, and let $f\colon G\to G$ be a function satisfying $\tfrac{\dis(f)}{2}<r<\min\{\tfrac{\lambda(G)}{4},\tfrac{\sep\mathcal D_{\partial G}}{2},\tfrac{e(G_0)}{24}\}$.
Then $G\subseteq f(G)^{2r}$.
\end{proposition}

\begin{proof}
The proof follows the same steps as that of Proposition~\ref{prop:trees_maximal_geodesics}.
We arrange the elements of $\mathcal D_{\partial G}$ in decreasing order $c_1>c_2>\cdots>c_n$ and define $\partial_iG$ in analogy with the previous result.
The only difference is the definition of $G_1$, which is the union of all the geodesics connecting $\partial_1G$ to $G_0$.
Then, $G_2,\dots,G_n$ are defined similarly: $G_i$ is the union of all geodesics connecting $\partial_iG$ to $\bigcup_{j=0}^{i-1}G_j$; see Figure~\ref{fig:graph_partition}.
Note that \eqref{eq:diam} still holds in this setting for $x\in G_i$ and $y\in G_j$.
We want to define bijections $\Phi_i\colon\partial_iG\to\partial_iG$ for $i\in\{1,\dots,n\}$ such that for every $v\in\partial_1G$, we have $d(f(v),\Phi_i(v))<2r$.

\begin{figure}[h!]
\centering
\begin{subfigure}{0.23\textwidth}
\centering
\begin{tikzpicture}[scale=0.72]
\draw (-2,0)--(3,0) (-1,-1.5)--(-1,1) (-1,-1)--(-1.5,-1) (0,1)--(0,-1) (1.5,-1.5)--(1.5,1.5) (1,1)--(2.5,1);
\draw (-1.5,-1)--(-1,-1) (-1,-1.5)--(-1,0)--(1.5,0)--(1.5,1)--(2.5,1) (1,1)--(1.5,1)--(1.5,1.5) (3,0)--(1.5,0)--(1.5,-1.5) (-2,0)--(-1,0)--(-1,1);
\draw (-1,-1.5) circle (3.5pt) (-1.5,-1) circle (3.5pt) (2.5,1) circle (3.5pt);
\draw (-2,0) node{\tiny$\square$};
\draw (-1,1) node{\tiny$\square$};
\draw (0,1) node{\tiny$\triangle$};
\draw (0,-1) node{\tiny$\triangle$};
\draw (1.5,1.5) node{\small$\times$};
\draw (1,1) node{\small$\times$};
\draw (3,0) node{\small$\times$};
\draw (1.5,-1.5) node{\small$\times$};
\fill[white] (0,0) circle (1.15);
\draw[red] (0,0) circle (1.15);
\fill[red,opacity=0.2] (0,0) circle (1.15);
\draw (0,0) node{$G_0$};
\end{tikzpicture}
\subcaption{}
\end{subfigure}\quad
\begin{subfigure}{0.23\textwidth}
\centering
\begin{tikzpicture}[scale=0.72]
\draw (-2,0)--(3,0) (-1,-1.5)--(-1,1) (-1,-1)--(-1.5,-1) (0,1)--(0,-1) (1.5,-1.5)--(1.5,1.5) (1,1)--(2.5,1);
\draw[ultra thick,red,dashed] (-1.5,-1)--(-1,-1) (-1,-1.5)--(-1,0)--(1.5,0)--(1.5,1)--(2.5,1);
\draw[red] (0.75,0) node[below]{$T_1$};
\draw[red] (-1,-1.5) circle (3.5pt) (-1.5,-1) circle (3.5pt) (2.5,1) circle (3.5pt);
\draw (-2,0) node{\tiny$\square$};
\draw (-1,1) node{\tiny$\square$};
\draw (0,1) node{\tiny$\triangle$};
\draw (0,-1) node{\tiny$\triangle$};
\draw (1.5,1.5) node{\small$\times$};
\draw (1,1) node{\small$\times$};
\draw (3,0) node{\small$\times$};
\draw (1.5,-1.5) node{\small$\times$};
\fill[white] (0,0) circle (1.15);
\draw[red] (0,0) circle (1.15);
\fill[red,opacity=0.2] (0,0) circle (1.15);
\draw (0,0) node{$G_0$};
\draw[red] (2.3,1) node[below]{$G_1$};
\end{tikzpicture}
\subcaption{}
\end{subfigure}\quad
\begin{subfigure}{0.23\textwidth}
\centering
\begin{tikzpicture}[scale=0.72]
\draw (-2,0)--(3,0) (-1,-1.5)--(-1,1) (-1,-1)--(-1.5,-1) (0,1)--(0,-1) (1.5,-1.5)--(1.5,1.5) (1,1)--(2.5,1);
\draw[ultra thick,red] (-1.5,-1)--(-1,-1) (-1,-1.5)--(-1,0)--(1.5,0)--(1.5,1)--(2.5,1);
\draw[ultra thick,red,dashed] (1,1)--(1.5,1)--(1.5,1.5) (3,0)--(1.5,0)--(1.5,-1.5);
\draw[red] (-1,-1.5) circle (3.5pt) (-1.5,-1) circle (3.5pt) (2.5,1) circle (3.5pt);
\draw (-2,0) node{\tiny$\square$};
\draw (-1,1) node{\tiny$\square$};
\draw (0,1) node{\tiny$\triangle$};
\draw (0,-1) node{\tiny$\triangle$};
\draw[red] (1.5,1.5) node{\small$\times$};
\draw[red] (1,1) node{\small$\times$};
\draw[red] (3,0) node{\small$\times$};
\draw[red] (1.5,-1.5) node{\small$\times$};
\draw[red] (1.5,-1) node[right]{$G_2$};
\fill[white] (0,0) circle (1.15);
\draw[red] (0,0) circle (1.15);
\fill[red,opacity=0.2] (0,0) circle (1.15);
\draw (0,0) node{$G_0$};
\end{tikzpicture}
\subcaption{}
\end{subfigure}
\quad
\begin{subfigure}{0.23\textwidth}
\centering
\begin{tikzpicture}[scale=0.72]
\draw (-2,0)--(3,0) (-1,-1.5)--(-1,1) (-1,-1)--(-1.5,-1) (0,1)--(0,-1) (1.5,-1.5)--(1.5,1.5) (1,1)--(2.5,1);
\draw[ultra thick,red] (-1.5,-1)--(-1,-1) (-1,-1.5)--(-1,0)--(1.5,0)--(1.5,1)--(2.5,1) (1,1)--(1.5,1)--(1.5,1.5) (3,0)--(1.5,0)--(1.5,-1.5);
\draw[ultra thick,red,dashed] (-2,0)--(-1,0)--(-1,1);
\draw[red] (-1,-1.5) circle (3.5pt) (-1.5,-1) circle (3.5pt) (2.5,1) circle (3.5pt);
\draw[red] (-2,0) node{\tiny$\square$};
\draw[red] (-1,1) node{\tiny$\square$};
\draw (0,1) node{\tiny$\triangle$};
\draw (0,-1) node{\tiny$\triangle$};
\draw[red] (1.5,1.5) node{\small$\times$};
\draw[red] (1,1) node{\small$\times$};
\draw[red] (3,0) node{\small$\times$};
\draw[red] (1.5,-1.5) node{\small$\times$};
\draw[red] (-1.5,0.5) node[left]{$G_3$};
\fill[white] (0,0) circle (1.15);
\draw[red] (0,0) circle (1.15);
\fill[red,opacity=0.2] (0,0) circle (1.15);
\draw (0,0) node{$G_0$};
\end{tikzpicture}
\subcaption{}
\end{subfigure}
\caption{An example of a finite graph $G$ and its partition $G_0$, $G_1$, $G_2$, $G_3$ as described in the proof of Proposition~\ref{prop:graphs_maximal_geodesics}.
The leaves in $\partial_1G$, $\partial_2G$ and $\partial_3G$ are represented with circles, crosses, and squares, respectively.
The dashed lines are $G_1$, $G_2$, and $G_3$ in (B), (C), and (D).}
\label{fig:graph_partition}
\end{figure}

According to Proposition~\ref{prop:G_0_in_f(G_0)}, $G_0\subseteq f(G_0)^r\subseteq f(G_0)^{2r}$.
Hence, if $v\in\partial G$, Lemma~\ref{lemma:ziga} gives $f(v)\notin G_0$ (with $\varepsilon=r$, since $d(v,G_0)\ge\lambda(G)>4r>r+\dis(f)$).
Both the base step and the recursive step can then be readily applied in this setting.
\end{proof}

As in the tree case, we immediately obtain lower bounds to the Gromov-Hausdorff distance.

\begin{theorem}
\label{thm:graphs_H_vs_GH_maximal_geodesics}
Let $G$ be a finite metric graph with leaves that is not a tree.
Then,
\[
d_\gh(G,X)\ge\min\big\{\tfrac{1}{2}d_\h(G,X),\tfrac{\lambda(G)}{4},\tfrac{\sep\mathcal D_{\partial G}}{2},\tfrac{e(G_0)}{24}\big\}.
\]
\end{theorem}
\begin{proof}
The proof is a straightforward adaptation of that of Theorem~\ref{thm:trees_H_vs_GH_maximal_geodesics}.
\end{proof}

\section{Conclusion and open questions}

We have furthered the study of lower bounding the Gromov--Hausdorff distance between a space and a subset by their Hausdorff distance, particularly for metric graphs.
This investigation sparks several open questions and new research directions, especially for spaces beyond graphs.

We end by listing some open questions.

\begin{question}
Is the density constant $\frac{e(G_0)}{24}$ in Theorem~\ref{thm:graphs} optimal?
\end{question}

\begin{question}
Do our results generalize to general length spaces? If so, under what density assumptions? 
\end{question}

\begin{question}
Are there versions of our results that hold for manifolds with boundary?
\end{question}

\begin{question}
Are there classes of metric spaces (other than graphs) where the Gromov--Hausdorff distance between the space and a dense enough subset equals their Hausdorff distance?
\end{question}

%
%
%

\bibliographystyle{plain}
\bibliography{LowerBoundingTheGromovHausdorffDistanceInMetricGraphs}

\appendix

\section{Another take on fractional bounds}
\label{app:fractional}

We next prove Theorems~\ref{thm:trees_H_vs_GH_with_vertex_degrees} and \ref{thm:graphs_H_vs_GH_with_vertex_degrees}, analogous results to Theorems~\ref{thm:trees_H_vs_GH_maximal_geodesics} and \ref{thm:graphs_H_vs_GH_maximal_geodesics}, that while often weaker, also provide advantages in certain cases.
We refer to Example~\ref{ex:strength_comparison} for a comparison.

Let $G$ be a graph.
We denote by $\accentset{\circ}{G}$ the graph $G$ pruned of its leaf edges.
Since $G$ is a finite graph, we note that after iterating the $\circ$ operation a finite number of times, we obtain $G_0$, the core of the graph.
For a finite graph $G$ with leaves, we denote by $\Lambda(G)$ the set of all leaf edge lengths of $G$ (so $\lambda(G)=\min\Lambda(G)$).

In the following result, we rule out the case where $T$ is a segment.
However, we have already covered the situation in Corollary~\ref{coro:segment}, which provides precise computations.

\begin{proposition}
\label{prop:trees_with_vertex_degrees}
For $T$ a finite tree, let the function $f\colon T\to T$ satisfy $\tfrac{\dis(f)}{2}<r<\min\bigl\{\tfrac{e(T)}{8},\tfrac{\sep\Lambda(T)}{5}\bigr\}$.
Assume that $T$ is not a segment, and so we can consider its canonical representation where all its non-leaf vertices $V(T)\setminus\partial T$ have degree at least $3$.
Then, $T\subseteq f(T)^{5r}$.
\end{proposition}

\begin{proof}
We  divide the proof into steps (a)--(c).

\noindent{\bf Step (a).} Since $r<\tfrac{e(T)}{6}$, for every vertex $v\in V(T)\setminus\partial T$ of degree $d$, there is a unique vertex $v'\in V(T)\setminus\partial T$ with degree at least $d$ such that $d(f(v),v')<3r$.

For every $v\in V(T)\setminus\partial T$ and every edge $e$ contributing to its degree, fix a point $x_e\in e$ at distance $d(v,x_e)=r$.
Hence, we constructed $d$ distinct points $x_e$ at distance $r$ from $v$.
Note that $d(x_e,x_{e'})=2r$ for every pair of distinct edges $e\neq e'$.
Given the bound on the distortion of $f$, $d(f(x_e),f(v))< 3r$ and $d(f(x_e),f(x_{e'}))>2r-2r=0$.

Suppose, by contradiction, that there is no vertex $v'\in V(T)\setminus\partial T$ satisfying $d(f(v),v')<3r$.
Then, thanks to the assumption on $e(T)$ and the pigeonhole principle, there are two distinct $x_e,x_{e'}$ such that the geodesic connecting $f(v)$ to $f(x_e)$ and the geodesic connecting $f(v)$ to $f(x_{e'})$ share the same initial segment of length at least $3r$.
Hence
\[
0<d(f(x_e),f(x_{e'}))\leq d(f(x_e),f(v))+d(f(v),f(x_{e'}))-2\cdot 3r<3r+3r-6r=0,
\]
a contradiction; see Figure~\ref{fig:degree_new}(A).

\begin{figure}[h!]
\centering
\begin{subfigure}{0.3\textwidth}
\centering
\begin{tikzpicture}
\draw (0,0)--(2,0)--(4,1.5) (2,0)--(4,-1.5);
\draw[dashed] (-0.5,0)--(0,0);
\fill (3.5,1.125) circle (2pt) (3.5,-1.125) circle (2pt) (0.5,0) circle (2pt);
\draw (0.5,-0.2)--(0.5,-0.4)--(2,-0.4)node[pos=0.5,below]{$\geq 3r$}--(2,-0.2);
\draw (0.5,0) node[above]{$f(v)$};
\draw (3.5,1.125) node[above left]{$f(x_e)$};
\draw (3.5,-1.125) node[above right]{$f(x_{e'})$};
\end{tikzpicture}
\caption{}
\label{fig:degreeA}
\end{subfigure}
\begin{subfigure}{0.68\textwidth}
\centering
\begin{tikzpicture}[scale=0.75]
\draw (0,0)--(2,2) (0,0)--(-2,2) (0,0)--(2,-2) (0,0)--(-2,-2);
\fill (0,0) circle (2.67pt) (1.5,1.5) circle (2.67pt) (-1.5,1.5) circle (2.67pt) (1.5,-1.5) circle (2.67pt) (-1.5,-1.5) circle (2.67pt);
\draw (0,0) node[above]{$v$};
\draw (1.5,1.5) node[above left]{$x_{e_1}$};
\draw (-1.5,1.5) node[above right]{$x_{e_2}$};
\draw (1.5,-1.5) node[below left]{$x_{e_3}$};
\draw (-1.5,-1.5) node[below right]{$x_{e_4}$};
\draw (5.5,0)--(3.5,2) (6.5,0)--(5.5,0)--(3.5,-2) (6.5,0)--(8.5,-2) (6.5,0)--(8.5,2);
\fill (6,0) circle (2.67pt) (4,1.5) circle (2.67pt) (8,1.5) circle (2.67pt) (4,-1.5) circle (2.67pt) (8,-1.5) circle (2.67pt);
\draw (6,0) node[above]{$f(v)$};
\draw (8,1.5) node[above left]{$f(x_{e_1})$};
\draw (4,1.5) node[above right]{$f(x_{e_2})$};
\draw (8,-1.5) node[below left]{$f(x_{e_3})$};
\draw (4,-1.5) node[below right]{$f(x_{e_4})$};
\draw (5.5,-0.2)--(5.5,-0.4)--(6,-0.4)node[below]{$<3r$}--(6,-0.2);
\end{tikzpicture}
\caption{}
\label{fig:degreeB}
\end{subfigure}
\caption{(A) If the geodesics going from $f(v)$ to $f(x_e)$ and $f(x_{e'})$ agree for at least $3r$ of their length, then the distance between $f(x_e)$ and $f(x_{e'})$ is at most $d(f(v),f(x_e))+d(f(v),f(x_{e'}))-2\cdot 3r$.
(B) The assumption on $e(G)$ is essential.
If there were shorter edges, a vertex $v$ of degree $4$ could be sent to a point $3r$-close to two vertices of degree $3$ without contradiction.}
\label{fig:degree_new}
\end{figure}

As for uniqueness, if $v',v''\in V(T)\setminus\partial T$ satisfy $d(f(v),v')<3r$ and $d(f(v),v'')<3r$, then $d(v',v'')<6r$, which implies that $v'=v''$ by the constraint on $e(T)$.

\smallskip

\noindent{\bf Step (b).} Assume that $r<\tfrac{e(T)}{8}$.
The function $\Psi\colon V(T)\setminus\partial T\to V(T)\setminus\partial T$ associating $v\mapsto v'$ in the notation of {\bf Step (a)} is a bijection that preserves the vertex degrees.

Let $v,w\in V(T)\setminus\partial T$ be such that $\Psi(v)=\Psi(w)$.
Then
\[
d(v,w)\leq d(f(v),f(w))+\dis(f)\leq d(f(v),\Psi(v))+d(\Psi(w),f(w))+\dis(f)<3r+3r+2r=8r,
\]
and so $v=w$ given the constraint on $e(T)$.
The fact that $\Psi$ preserves the vertex degrees follows from the finiteness of $T$ and the pigeonhole principle; see Figure~\ref{fig:degree_new}(B).

\smallskip

\noindent{\bf Step (c).} Assume that $r<\min\{\tfrac{e(T)}{8},\tfrac{\sep\Lambda(T)}{5}\}$.
Then, there is a bijection $\Theta\colon\partial T\to\partial T$ such that for every $v\in\partial T$, we have $d(f(v),\Theta(v))<5r$.

The proof idea is similar to that of Proposition~\ref{prop:trees_maximal_geodesics}.
Let us arrange the elements of $\Lambda(T)$ in decreasing order: $\lambda_1>\lambda_2>\cdots>\lambda_n$.
Partition the set $\partial T$ accordingly, in subsets $\partial_iT$, for $i\in\{1,\dots,n\}$, collecting the leaves whose leaf edges have length $\lambda_i$.
We define $n$ bijections $\Theta_i\colon\partial_iT\to\partial_iT$ that form $\Theta$ by induction.

Let us fix some notation.
Let us partition the leaf edges into $E_1,\dots,E_n$ according to the partition $\partial_1T,\dots,\partial_nT$ of the leaves.
For every $e\in E_i$, we denote by $v_e$ its leaf and by $w_e$ its end in $V(T)\setminus\partial T$.

\smallskip

\noindent{\bf Base step.} For every $v_e\in\partial_1T$, we have $f(v_e)\notin \accentset{\circ}{T}$.
Indeed, according to {\bf Step (b)}, $\accentset{\circ}T\subseteq f(\accentset{\circ}T)^{3r}$ since the latter is path connected and contains all the vertices in $T\setminus\partial T$.
Thus, Lemma~\ref{lemma:ziga} implies the claim by setting $\varepsilon=3r$ since $d(v_e,\accentset{\circ}T)\geq e(T)>8r$.

The image $f(e)^{3r}$ is path connected and contains both $f(v_e)$ and $\Psi(w_e)$.
We claim that $\Psi(w_e)$ is the only vertex in $V(T)\setminus\partial T$ that is covered by $f(e)^{3r}$.
Indeed, suppose that $w\in V(T)\setminus\partial T$ satisfies $\Psi(w)\in f(e)^{3r}$, and let $x\in e$ be such that $d(f(x),\Psi(w))<3r$.
Then,
\[
\begin{aligned}
d(x,w)&\,\leq d(f(x),f(w))+\dis(f)\leq d(f(x),\Psi(w))+d(\Psi(w),f(w))+\dis(f)\\
&\,<3r+3r+2r=8r<e(T),
\end{aligned}
\]
which implies that $w=w_e$ since any geodesic connecting $x$ to a point in $V(T)\setminus\partial T$ must cross $w_e$.

Furthermore, note that
\[
\begin{aligned}
d(f(v_e),\Psi(w_e)&\,\geq d(f(v_e),f(w_e))-d(f(w_e),\Psi(w_e))\geq d(v_e,w_e)-\dis(f)-d(f(w_e),\Psi(w_e))\\
&\,>\lambda_1-2r-3r=\lambda_1-5r>\lambda_1-\sep\Lambda(T)\geq\lambda_2.
\end{aligned}
\]
It follows that $f(v_e)$ must belong to an edge $e'\in E_1$ since the other leaf edges are too short.
Thus, $w_{e'}=\Psi(w_e)$ and
\[
d(v_{e'},f(v_e))=d(v_{e'},w_{e'})-d(w_{e'},f(v_e))=\lambda_1-d(\Psi(w_e),f(v_e))<\lambda_1-(\lambda_1-5r)=5r.
\]
Set $\Theta_1(v_e)=v_{e'}$.

We claim that the function $\Theta_1$ is injective.
Let $v_e,v_{e'}\in \partial_1T$ such that $\Theta_1(v_e)=\Theta_1(v_{e'})$.
Then,
\[
d(v_e,v_{e'})\leq d(f(v_e),\Theta_1(v_e))+d(\Theta_1(v_{e'}),f(v_{e'}))+\dis(f)<5r+5r+2r=12r<2e(T),
\]
and so $v_e=v_{e'}$.
Since $\partial_1T$ is finite, $\Theta_1$ is a bijection.

\smallskip

\noindent{\bf Recursive Step.} Suppose that $\Theta_1,\dots,\Theta_i$ have been constructed with the desired properties.
Let $e\in E_{i+1}$.
We claim that $f(v_e)\notin\accentset{\circ}T\cup E_1\cup\cdots\cup E_i$.
As in the base step, $f(v_e)\notin\accentset{\circ}T$.
Furthermore, $d(v_e,E_j)\geq e(T)>8r$ and $E_j\subseteq f(E_j)^{5r}$ for every $j\in\{1,\dots,i\}$.
Then, the claim follows thanks to Lemma~\ref{lemma:ziga}.

We can prove as in the base step that there is a leaf edge $e'\in E_{i+1}$ such that $f(v_e)\in e'$ due to the constraint on $\sep\Lambda(T)$, and the function $\Theta_{i+1}\colon v_e\mapsto v_{e'}$ is well-defined, bijective and satisfies $d(f(v_e),\Theta_{i+1}(e))<5r$.

\smallskip

As a consequence of {\bf Step (c)}, $f(T)^{5r}$ contains all the vertices of $T$.
Therefore, it contains $T$ entirely since $f(T)^{5r}$ is path connected.
\end{proof}

As a corollary of Proposition~\ref{prop:trees_with_vertex_degrees}, we obtain the following result.

\begin{theorem}
\label{thm:trees_H_vs_GH_with_vertex_degrees}
Let $T$ be a finite tree that is not a segment and let $X$ be a subset thereof.
Consider the canonical representation of $T$ where every non-leaf vertex has degree at least $3$.
Then
\[
d_\gh(T,X)\geq\min\left\{\tfrac{1}{5}d_\h(T,X),\tfrac{e(T)}{8},\tfrac{\sep\Lambda(T)}{5}\right\}.
\]
\end{theorem}

\begin{proof}
The proof is the same as that of Theorem~\ref{thm:trees_H_vs_GH_maximal_geodesics}, except now using Proposition~\ref{prop:trees_with_vertex_degrees} in place of Proposition~\ref{prop:trees_maximal_geodesics}.
\end{proof}

As mentioned above, the two results, Theorem~\ref{thm:trees_H_vs_GH_maximal_geodesics} and~\ref{thm:trees_H_vs_GH_with_vertex_degrees}, do not imply one another and can be used in different situations to provide better lower bounds.

\begin{example}
\label{ex:strength_comparison}
Consider the tree $T$ and subset $X$ described in Theorem~\ref{thm:counterExample}.
By applying Theorem~\ref{thm:trees_H_vs_GH_maximal_geodesics}, we obtain that $d_\gh(T,X)\geq\frac{1}{2}d_\h(T,X)=\frac{\varepsilon}{2}$, whereas Theorem~\ref{thm:trees_H_vs_GH_with_vertex_degrees} only provides the weaker lower bound $d_\gh(T,X)\geq\min\{\frac{\varepsilon}{5},\frac{1}{8}\}$.

For $\varepsilon<\frac{1}{4}$, let us modify the same example by attaching to each leaf of the tree $T$ a pair of new leaf edges of length $1$.
Let us denote by $T'$ the tree constructed in this way.
Let $X'$ be a subset of $T'$ satisfying $d_\h(T',X')>\frac{5\varepsilon}{2}$.
Note that $\lambda(T')=1$ and $\sep\mathcal D_{\partial T'}=\varepsilon$.
Therefore, Theorem~\ref{thm:trees_H_vs_GH_maximal_geodesics} implies that
\[
d_\gh(T',X')\geq \min\big\{\tfrac{d_\h(T',X')}{2},\tfrac{1}{4},\tfrac{\varepsilon}{2}\big\}=\tfrac{\varepsilon}{2}.
\]
However, since $e(T')=1$ and $\Lambda(T')=\{1\}$, which implies $\sep\Lambda(T')=\infty$, using Theorem~\ref{thm:trees_H_vs_GH_with_vertex_degrees} we obtain the stronger inequality
\[
d_\gh(T',X')\geq\min\big\{\tfrac{1}{5}d_\h(T',X'),\tfrac{1}{8}\big\}>\tfrac{\varepsilon}{2}.
\]
\end{example}

We again have analogous results for graphs instead of trees.

\begin{proposition}
\label{prop:graphs_with_vertex_degrees}
Let $G$ be a finite graph with leaves that is not a tree, and consider its canonical representation where each non-leaf vertex has degree at least $3$.
Let  
$f\colon G\to G$ satisfy $\tfrac{\dis(f)}{2}<r<\min\{\tfrac{e(G)}{8},\tfrac{\sep\Lambda(G)}{5},\tfrac{e(G_0)}{24}\}$.
Then, $G\subseteq f(G)^{5r}$.
\end{proposition}

\begin{proof}
The proof is built upon Propositions~\ref{prop:G_0_in_f(G_0)} and~\ref{prop:trees_with_vertex_degrees}.
Both {\bf Step (a)} and {\bf Step (b)} can be followed similarly.
In the notation of the proof of {\bf Step (a)} in Proposition~\ref{prop:trees_with_vertex_degrees}, if $e$ is a self-loop contributing twice to the vertex degree, we pick a point for each end of $e$.
For the sake of simplicity, these pairs of points can be treated as coming from distinct edges.
Note that $d(x_e,x_{e'})=2r$ for every pair of distinct edges $e\neq e'$ since $e(G)>4r$.
As for {\bf Step (c)}, we still partition the leaves $\partial G=\partial_1G\cup\cdots\cup\partial_nG$ and the leaf edges $E_1\cup\cdots\cup E_n$ according to the length of the leaf edges.
Then, the bijective functions $\Theta_1,\dots,\Theta_n$ can be constructed in the same way with the further observation that a leaf $v\in\partial G$ cannot be sent to $G_0$ since the latter is contained in $f(G_0)^r$.
Denote by $F$ the forest consisting of the union of geodesics connecting $V(G)\setminus(\partial G\cup V(G_0))$ to $G_0$.
Thus,
\[
G=G_0\cup F\cup E_1\cup\cdots\cup E_n\subseteq f(G_0)^r\cup f(\accentset{\circ}{G})^{3r}\cup f(E_1)^{5r}\cup\cdots\cup f(E_n)^{5r}\subseteq f(G)^{5r}.
\]
\end{proof}

\begin{theorem}
\label{thm:graphs_H_vs_GH_with_vertex_degrees}
Let $G$ be a finite metric graph with leaves that is not a tree.
Assume that $G$ is in its canonical representation where each non-leaf vertex has degree at least $3$.
Then,
\[
d_\gh(G,X)\geq \min\big\{\tfrac{1}{5}d_\h(G,X),\tfrac{e(G)}{8},\tfrac{\sep\Lambda(G)}{5},\tfrac{e(G_0)}{24}\big\}.
\]
\end{theorem}
\begin{proof}
The proof is a straightforward adaptation of that of Theorem~\ref{thm:trees_H_vs_GH_with_vertex_degrees}.
\end{proof}

As in the tree case (Example~\ref{ex:strength_comparison}), it is possible to show that Theorems~\ref{thm:graphs_H_vs_GH_maximal_geodesics} and~\ref{thm:graphs_H_vs_GH_with_vertex_degrees} provide independent lower bounds.\\

\end{document}